\def\CC{{\mathbb C}}
\def\NN{{\mathbb N}}
\def\RR{{\mathbb R}}
\def\ZZ{{\mathbb Z}}
\def\cT{{\cal T}}
\def\cV{{\cal V}}
\def\Der{\hbox{{\rm{Der}}}}
\def\Rank{\hbox{{\rm{rank}}}}
\def\wgd{\hbox{{\rm{w.gl.dim}}}}
\def\Ker{\hbox{{\rm{Ker}}}}
\def\lpd{\hbox{{\rm{l.pd}}}}
\def\rpd{\hbox{{\rm{r.pd}}}}
\def\gr{\hbox{{\rm{gr}}}}
\def\Inf{\hbox{{\rm{inf}}}}
\def\Grade{\hbox{{\rm{grade}}}}
\def\Ht{\hbox{{\rm{ht}}}}
\def\Codim{\hbox{{\rm{codim}}}}
\def\Sym{\hbox{{\rm{Sym}}}}
\def\Ext{{\hbox{\rm{Ext}}}}
\def\Ass{{\hbox{\rm {Ass}}}}
\def\Hom{{\hbox{\rm{Hom}}}}
\def\Dim{{\hbox{\rm{dim}}}}
\def\InjDim{{\hbox{\rm{inj.dim}}}}
\def\Supp{{\hbox{\rm{Supp}}}}
\def\Spec{{\hbox{\rm{Spec}}}}
\def\Ord{{\hbox{\rm{ord}}}}
\def\Ad{{\hbox{\rm{Adj}}}}
\newtheorem{Teo}{Theorem}[section]
\newtheorem{Lemma}[Teo]{Lemma}
\newtheorem{Prop}[Teo]{Proposition}
\newtheorem{Cor}[Teo]{Corollary}
\newtheorem*{MainEx}{Main Example}
\theoremstyle{definition}
\newtheorem{Def}[Teo]{Definition}
\newtheorem{Rem}[Teo]{Remark}
\newtheorem{Hyp}[Teo]{Hypothesis}
\begin{document}
\title{On certain rings of differentiable type and finiteness properties of local cohomology}
\author{Luis N\'u\~nez-Betancourt}
\maketitle
\abstract{ 
Let $R$ be a commutative $F$-algebra, where $F$ is a field of characteristic $0$, satisfying the following conditions:
$R$ is equidimensional of dimension $n$, every residual field with respect to a maximal ideal is an algebraic extension of $F,$ and
$\Der_F (R)$ is a finitely generated projective $R$-module of rank $n$ such that $R_m\otimes_R \Der_F (R)=\Der_F(R_m)$.
We show that  
the associated graded ring of the ring of differentiable operators, $D(R,F)$, is a commutative
Noetherian regular with unity and pure graded dimension equal to $2\dim(R)$.
Moreover, we prove that $D(R,F)$ has weak global dimension equal to $\dim(R)$ and that  its Bernstein class  is closed under localization at one element.
Using these properties of $D(R,F)$, we show that the set of associated primes of every local cohomology module, $H^i_I(R)$, is finite. 
If $(S,m,K)$ is a complete regular local ring of mixed characteristic $p>0$, we show that the localization of $S$ at $p$, $S_p$, 
is such a ring. As a consequence, the set of associated primes of $H^i_I (S)$ that does not 
contain $p$ is finite. Moreover, we prove this finiteness property for a larger class of functors.
}

\section{Introduction}
Let $R$ denote a commutative Noetherian ring with unity. 
If $M$ is an $R$-module and $I\subset R$ is an ideal, we denote the $i$-th 
cohomology of $M$ with support in $I$ by $H^i_I (M)$. The structure of these modules has been widely studied
by several authors. Among the results obtained, one encounters the following finiteness properties for certain 
regular rings.

\begin{itemize}
\item[\rm{(1)}] \quad The set of associated primes of $H^i_I(R)$ is finite;
\item[\rm{(2)}] \quad The Bass numbers of $H^i_I(R)$ are finite;
\item[\rm{(3)}] \quad $\InjDim H^i_I (R)\leq \Dim\Supp H^i_I(R)$.
\end{itemize}
Lyubeznik showed  ($1$), ($2$) and ($3$) hold in
the local case of equal characteristic $0$ \cite{LyuDMod}. His technique relies in the use of $D$-modules 
in a power series ring with coefficients over a field of characteristic $0$. Later, in his work on  local cohomology 
for unramified mixed characteristic $p>0$ \cite{LyuUMC}, 
Lyubeznik used rings of differentiable type to prove that the set of associated primes not containing $p$ is finite.
Our goal is to develop the theory of $D$-modules in a more general setting in order to prove a similar 
result for any regular local ring of mixed characteristic, namely: 

\begin{Teo}\label{MainOne}
Let $R$ be a regular commutative Notherian ring with unity that contains a field, $F$, of characteristic $0$ satisfying the following conditions: 
\begin{itemize}
\item[\rm{(1)}] $R$ is equidimensional of dimension $n$;
\item[\rm{(2)}] every residual field with respect to a maximal ideal is an algebraic extension of $F$;
\item[\rm{(3)}] $\Der_F (R)$ is a finitely generated projective $R$-module of rank $n$ such that $R_m\otimes_R \Der_F (R)=\Der_F(R_m)$.
\end{itemize}
Then, the ring of $F$-linear differential operators $D(R,F)$ is a ring of differentiable type of weak global dimension equal to $\Dim(R)$. 
Moreover, the Bernstein class of $D(R,F)$ is closed under localization at one element.
\end{Teo}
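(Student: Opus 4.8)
The plan is to reduce every assertion to the structure of the associated graded ring of $D(R,F)$ under the order filtration $D(R,F) = \bigcup_i D_i(R,F)$, where $D_i(R,F)$ denotes the operators of order at most $i$. First I would show that the principal symbol map induces an isomorphism $\gr D(R,F) \cong \Sym_R \Der_F(R)$. This is a local statement on $\Spec R$, and hypothesis (3) guarantees that $\Der_F(R)$ localizes correctly, so it suffices to treat $D(R_m,F)$, where $\Der_F(R_m)$ is free of rank $n$ and the symbol calculus is the usual one. Since $\Der_F(R)$ is projective of rank $n$ over the regular ring $R$, $\Sym_R \Der_F(R)$ is locally a polynomial ring in $n$ variables over a regular local ring, hence a commutative Noetherian regular ring; equidimensionality in (1) upgrades this to pure dimension $\Dim R + n = 2\Dim R$. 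Together with the routine verification of the filtration axioms, this shows $D(R,F)$ is a ring of differentiable type.

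For the weak global dimension I would prove matching bounds. For the lower bound, I would exhibit the Spencer (Koszul) resolution of $R$ as a left $D(R,F)$-module,
\[ 0 \to D(R,F)\otimes_R \wedge^n \Der_F(R) \to \cdots \to D(R,F)\otimes_R \Der_F(R) \to D(R,F) \to R \to 0, \]
whose terms are projective because the exterior powers of the projective module $\Der_F(R)$ are projective; a local computation (passing to $R_m$, where the complex becomes the de Rham resolution for a free Weyl-type algebra) shows $\pd_{D(R,F)} R = n$, so $\wgd D(R,F) \ge n$. For the upper bound, the filtered-to-graded transfer only gives $\wgd D(R,F) \le \wgd \gr D(R,F) = 2n$, and the essential point is to cut this in half. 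Here I would use that $\gr D(R,F)$ is regular commutative, hence Auslander--regular and Cohen--Macaulay, and that these properties lift to $D(R,F)$, yielding the grade--dimension identity $j(M) + d(M) = 2n$ for finitely generated modules. Because the order filtration endows $\gr D(R,F)$ with a Poisson bracket, Gabber's involutivity theorem gives Bernstein's inequality $d(M) \ge n$ for every nonzero finitely generated $M$; combined with the identity this forces $j(M) \le n$, whence (by the standard grade argument for Auslander--regular rings) $\pd_{D(R,F)} M \le n$ for all finitely generated $M$. Since $D(R,F)$ is Noetherian, $\wgd D(R,F)$ is computed on finitely generated modules and equals $n$.

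Finally, for the Bernstein class I would show that if $M$ is holonomic (that is, in the Bernstein class, $d(M) = n$) and $f \in R$, then the localization $M_f$, viewed over $D(R_f,F) \cong D(R,F)_f$, is again holonomic. The route is to establish a Bernstein functional equation $b(s)\, m f^s = P(s)\cdot m f^{s+1}$ with $b(s)$ a nonzero polynomial and $P(s) \in D(R,F)[s]$; the existence of a nonzero $b(s)$ is where holonomicity is used, since the finite length of $M$ bounds the multiplicities in the family $M[s] f^s$ and forces a polynomial relation by a Noetherian descending-chain argument. Specializing $s$ to negative integers then presents $M_f$ as a finitely generated $D(R,F)$-module admitting a good filtration of the minimal growth rate, so $d(M_f) = n$.

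The main obstacle is twofold and located in the last two paragraphs: the half-dimension drop in the weak global dimension, and the existence of $b$-functions, both of which classically rely on a polynomial ring with free derivations over a field. I expect the real work to be making this machinery---Bernstein's inequality via Gabber's theorem, Auslander regularity, and the functional equation---operate in the present relative setting where the base $R$ is not a field and $\Der_F(R)$ is only projective. My strategy here is uniform: localize at the maximal ideals of the central subring $R$ to reduce to free derivations over a regular local ring, run the classical arguments there, and transfer conclusions back using that $R$ is central in $D(R,F)$, so that $\Tor$ and $\Ext$ commute with this central localization and vanishing can be checked locally.
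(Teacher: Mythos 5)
Your proposal is correct and takes essentially the same route as the paper: identify $\gr(D(R,F))\cong\Sym(\Der_F(R))$ and verify the differentiable-type and pure graded dimension $2n$ properties by localizing at maximal ideals of $R$ (where Matsumura's theorem provides free derivations and coordinates), obtain $\wgd(D(R,F))=n$ by transferring the classical local results back through central localization, and prove closure of the Bernstein class under localization via the Bernstein--Sato functional equation and specialization at integers, exactly as the paper does following Mebkhout--Narv\'aez-Macarro. The only cosmetic differences are your Spencer-resolution lower bound and the Gabber-involutivity/Auslander-regularity phrasing of the upper bound, which the paper instead gets by localizing $\Ext^j_{D}(M,D)$ and invoking Bjork's theorem that $\wgd(D_m)=n$; your own concluding paragraph collapses to this same localization strategy.
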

This theorem generalizes some of the results of Mebkhout and Narv\'aez-Macarro about certain rings of differentiable type \cite{MeNa}. 
There, $R$ is a commutative Noetherian 
regular ring that contains a field, $F$, of characteristic zero satisfying $(1)$, $(2)$, but instead of ($3$) in Theorem \ref{MainOne},
there exist $F$-linear derivations $\partial_1,\ldots, \partial_n \in \Der_{F_0} (R)$ and $a_1\ldots, a_n\in R$ such that 
$\partial_i a_j=1$ if $i=j$ and $0$ otherwise.

\begin{Teo}\label{MainTwo}
Let $(R,m,K)$ be a regular commutative Notherian local ring of mixed characteristic $p>0$. Then, the set of associated primes of $H^i_I (R)$ that do not 
contain $p$
is finite for every $i\in \NN$ and every ideal $I\subset R$.
\end{Teo}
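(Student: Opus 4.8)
The plan is to reduce, by two successive flat‑base‑change steps (completion, then inverting $p$), to a ring to which Theorem~\ref{MainOne} applies, and then to run the Lyubeznik‑style $D$‑module argument there. First I would pass to the completion $\widehat{R}$, of dimension $n:=\dim R$. Since $R\to\widehat{R}$ is faithfully flat and local cohomology commutes with flat base change, $H^i_I(R)\otimes_R\widehat{R}\cong H^i_{I\widehat{R}}(\widehat{R})$. For a faithfully flat extension of Noetherian rings the associated primes downstairs are exactly the contractions of those upstairs, so every $P\in\Ass_R H^i_I(R)$ has the form $Q\cap R$ with $Q\in\Ass_{\widehat R}H^i_{I\widehat R}(\widehat R)$; moreover $p\notin P$ forces $p\notin Q$ because $p\in R$. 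Hence it suffices to bound the primes of $\Ass_{\widehat R}H^i_{I\widehat R}(\widehat R)$ not containing $p$, and I may assume from the start that $R=S$ is complete. Completeness is precisely what lets Cohen's structure theorem present $S$ as $V[[x_1,\dots,x_{n-1}]]$ for a complete discrete valuation ring $V$ of mixed characteristic, which is what is needed to control derivations.

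Next I would invert $p$. Set $F=\Frac(V)$, a field of characteristic $0$ contained in $S_p:=S[1/p]$, and note that $\dim S_p=n-1$. The key structural input, established before this statement, is that $S_p$ satisfies hypotheses $(1)$–$(3)$ of Theorem~\ref{MainOne}: it is regular and equidimensional of dimension $n-1$, its residue fields at maximal ideals are algebraic over $F$, and $\Der_F(S_p)$ is finitely generated projective of rank $n-1$, generated by $\partial/\partial x_1,\dots,\partial/\partial x_{n-1}$. Granting this, Theorem~\ref{MainOne} applies: $D(S_p,F)$ is of differentiable type and its Bernstein class is an abelian category of finite‑length $D(S_p,F)$‑modules closed under localization at one element. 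Computing local cohomology via the \v{C}ech complex on generators of $IS_p$, each term is a direct sum of one‑element localizations of $S_p$ and hence lies in the Bernstein class; since the class is closed under subquotients, every $H^i_{IS_p}(S_p)$ lies in it. Following Lyubeznik's equal‑characteristic argument, I would then deduce that membership in the Bernstein class, via finite length over $D(S_p,F)$ and the finitely many simple subquotients, forces $\Ass_{S_p}H^i_{IS_p}(S_p)$ to be finite.

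Finally I would transport this back across $S\to S_p$. Because $S_p$ is a localization of $S$, flat base change gives $H^i_{IS_p}(S_p)\cong H^i_{IS}(S)_p$, and associated primes under localization satisfy $\Ass_{S_p}H^i_{IS}(S)_p=\{\,PS_p : P\in\Ass_S H^i_{IS}(S),\ p\notin P\,\}$. Thus the set of associated primes of $H^i_{IS}(S)$ not containing $p$ is in bijection with the finite set $\Ass_{S_p}H^i_{IS_p}(S_p)$, hence finite; combined with the completion step, this proves the theorem. I expect the main obstacle to be the structural claims of the second paragraph — verifying hypothesis $(3)$, that $\Der_F(S_p)$ is projective of the correct rank, and hypothesis $(2)$, algebraicity of the residue fields over $F=\Frac(V)$ — together with the finiteness lemma that the Bernstein class controls the associated $S_p$‑primes; the two flat‑base‑change reductions are then routine.
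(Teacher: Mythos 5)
Your overall reduction --- complete, invert $p$, verify Hypothesis \ref{H} for the resulting characteristic-zero ring, then run the Bernstein-class finiteness argument and transport associated primes back through the two flat base changes --- is exactly the paper's strategy (Corollary \ref{CorLCMC} combined with the Main Example and Theorem \ref{CorAss}), and both your completion step and your localization step are carried out correctly.

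However, there is a genuine gap at the Cohen structure theorem step. You assert that the complete ring $S$ can be presented as $V[[x_1,\dots,x_{n-1}]]$ for a complete DVR $V$ of mixed characteristic. That presentation exists only in the \emph{unramified} case, i.e.\ when $p\notin m^2$. In general Cohen's theorem gives only a hypersurface presentation $S\cong V[[x_1,\dots,x_{n+1}]]/(p-h)V[[x_1,\dots,x_{n+1}]]$, with $(V,pV,K)$ an unramified complete DVR and $h$ in the square of the maximal ideal; when $p\in m^2$ (the ramified case), $S$ is not a power series ring over any DVR. A concrete obstruction: in $S=\ZZ_p[[x,y]]/(p-xy)$ the element $p$ factors as a product of the two non-associate primes $x$ and $y$, whereas in any $V[[t]]$ the element $p$ is a unit times a power of a single prime, so no isomorphism $S\cong V[[t]]$ can exist. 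Consequently your verification of Hypothesis \ref{H} for $S_p$ --- with $\Der_F(S_p)$ free on $\partial/\partial x_1,\dots,\partial/\partial x_{n-1}$ --- goes through only in the unramified case, which is precisely the case already settled by Lyubeznik \cite{LyuUMC}; the ramified case, which is the actual new content of Theorem \ref{MainTwo}, is left unproved. The paper closes this hole with Theorem \ref{Ex2} and the Main Example: starting from the hypersurface presentation, it shows that $\left(V[[x_1,\dots,x_{n+1}]]\otimes_V F\right)/(p-h)$ satisfies Hypothesis \ref{H}, where $\Der_F$ is finitely generated projective of the correct rank but in general \emph{not} free --- this is exactly why the paper works with the projectivity hypothesis (3) rather than the free dual-basis hypothesis of Mebkhout--Narv\'aez-Macarro. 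To repair your argument, replace the power-series presentation by the hypersurface one and invoke Theorem \ref{Ex2} (or reprove its kernel-of-$\varphi_f$ construction) to obtain Hypothesis \ref{H}; the rest of your proof then stands.
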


In fact, we prove this finiteness property for a larger class of rings and functors.

This manuscript is organized as follows. In section $2$, we study certain rings of differentiable type, 
and we prove Theorem \ref{MainOne}. Later, in section $3$, 
we generalize some results about properties of the Bernstein class
of rings of differentiable type obtained by  Mebkhout and  Narv\'aez-Macarro \cite{MeNa}. Finally, in section $4$, we show the 
finiteness of the associated primes of the local cohomology
for certain rings in characteristic $0$; moreover, we prove this result for a larger class of functors. 
As a consequence, we obtain Theorem \ref{MainTwo}.

\section{Rings of differentiable type}

We start by recalling a theorem from  Matsumura's book:

\begin{Teo}[ Teo. $99$ in \cite{Matsumura} ]\label{99}
Let $(R,m ,F)$ be a regular local commutative Notherian ring with unity of dimension $n$ containing a field $F_0$. Suppose that $F$ is
an algebraic separable extension of $F_0$. Let $\hat{R}$ denote the completion of $R$ with respect to $m$.
Let $x_1,\ldots,x_n$ be a regular system of parameters of $R$. Then, $\widehat{R}=F[[x_1,\dots,x_n]]$ the power 
series ring with coefficients in $F$, and $\Der_F {\hat{R}}$ is a free $\widehat{R}$-module with
basis $\partial / \partial x_1, \ldots,\partial / \partial x_n$. Moreover, the following conditions are equivalent:
\begin{itemize}
\item $\partial / \partial x_i$ ($i=1,\ldots , n$) maps $R$ into $R$, i.e. $\partial / \partial x_i\in Der_{F_0}(R)$;
\item there exist $D_1,\ldots, D_n \in \Der_{F_0} (R)$ and $a_1,\ldots, a_n\in R$ such that $D_i a_j=1$ if $i=j$ and 
$0$ otherwise;
\item there exist $D_1,\ldots, D_n \in \Der_{F_0} (R)$ and $a_1\ldots, a_n\in R$ such that \\$\det(D_i a_j) \not\in m$;
\item $\Der_{F_0} (R)$ is a free module of rank $n$;
\item $\Rank(\Der_{F_0}(R))=n$.
\end{itemize}
\end{Teo}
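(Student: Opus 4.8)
The plan is to pass to the completion, read the structural claims off the power series presentation, and then treat the five conditions by a Jacobian-matrix computation together with one differential-module argument. First I would establish the preliminary identifications. Since $\hat R$ is a complete equicharacteristic regular local ring containing $F_0$ with residue field $F$ separable algebraic over $F_0$, Hensel's lemma lifts $F$ to a coefficient field of $\hat R$ compatible with $F_0\hookrightarrow \hat R$, and Cohen's structure theorem then gives $\hat R=F[[x_1,\dots,x_n]]$ for the chosen regular system of parameters. Any $F$-derivation of $F[[x_1,\dots,x_n]]$ is $m$-adically continuous (it lowers order by at most one), hence is determined by its values on the $x_i$ and equals $\sum_i D(x_i)\,\partial/\partial x_i$; this shows $\Der_F\hat R$ is free on $\partial/\partial x_1,\dots,\partial/\partial x_n$. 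Because $F/F_0$ is separable algebraic, $\Omega_{F/F_0}=0$, so every $F_0$-derivation of $\hat R$ kills $F$ and is in fact $F$-linear; extending $F_0$-derivations of $R$ continuously then identifies $\Der_{F_0}(R)$ with $\{D\in\Der_F\hat R : D(R)\subseteq R\}\subseteq \bigoplus_i \hat R\,\partial/\partial x_i$.

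With these identifications the implications among (1)--(4) become determinant bookkeeping. Given $D_1,\dots,D_n\in\Der_{F_0}(R)$ and $a_1,\dots,a_n\in R$, the chain rule in $\hat R$ yields $D_i a_j=\sum_k (D_i x_k)(\partial a_j/\partial x_k)$, so the matrix $(D_i a_j)$ factors as $V J$ with $V=(D_i x_k)$ having entries in $R$ and $J=(\partial a_j/\partial x_k)$. For (1)$\Rightarrow$(2)$\Rightarrow$(3) one takes $D_i=\partial/\partial x_i$ and $a_j=x_j$, so that $(D_i a_j)$ is the identity. For (3)$\Rightarrow$(1) I would observe that $\det(D_i a_j)=\det V\cdot\det J\notin m$ forces $\det V$ to be a unit in $\hat R$, hence, since $m\hat R\cap R=m$, a unit in $R$; thus $V$ is invertible over $R$, and the relations $D_i=\sum_k (D_i x_k)\,\partial/\partial x_k$ invert to express each $\partial/\partial x_k$ as an $R$-linear combination of the $D_i$, so the $\partial/\partial x_k$ preserve $R$. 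Finally (1)$\Rightarrow$(4) is immediate, as then $\Der_{F_0}(R)=\bigoplus_i R\,\partial/\partial x_i$, and (4)$\Rightarrow$(5) is trivial.

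The substantive step, and the one I expect to be the main obstacle, is (5)$\Rightarrow$(1), since rank $n$ only produces a nonzero determinant $\det V\in R$, not a unit. To upgrade it I would argue through Kähler differentials. Using $\Der_{F_0}(R)=\Hom_R(\Omega_{R/F_0},R)$ and the fact that over the domain $R$ a dual has the same rank as the module, hypothesis (5) says $\Rank\Omega_{R/F_0}=n$. The second fundamental exact sequence $m/m^2\to \Omega_{R/F_0}\otimes_R F\to \Omega_{F/F_0}\to 0$, together with $\Omega_{F/F_0}=0$, gives $\dim_F(\Omega_{R/F_0}\otimes_R F)\le \dim_F m/m^2=n$; since the minimal number of generators always dominates the rank, these bounds collapse to equality, and a finitely generated module over a Noetherian local domain whose rank equals its minimal number of generators is free. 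Hence $\Omega_{R/F_0}$ is free of rank $n$ with basis $dx_1,\dots,dx_n$, its dual basis consists of derivations $D_i\in\Der_{F_0}(R)$ with $D_i x_j=\delta_{ij}$, and comparison with the free basis of $\Der_F\hat R$ identifies $D_i=\partial/\partial x_i$; in particular each $\partial/\partial x_i$ preserves $R$, which is (1). This closes the cycle, and it is precisely here that separability of $F/F_0$ is indispensable, through the vanishing of $\Omega_{F/F_0}$.
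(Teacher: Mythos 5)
The paper offers no proof to compare against: the statement is quoted verbatim as Theorem 99 of Matsumura's book and used as a black box, so your attempt must stand on its own. Evaluated that way, your preliminaries are correct (coefficient field via Hensel's lemma, automatic $m$-adic continuity of derivations, $F$-linearity of $F_0$-derivations from $\Omega_{F/F_0}=0$, and the identification $\Der_{F_0}(R)=\{D\in\Der_F(\hat R):D(R)\subseteq R\}$), and so is the determinant bookkeeping: $(1)\Rightarrow(2)\Rightarrow(3)$, the factorization $(D_ia_j)=VJ$ forcing $\det V$ to be a unit of $R$, hence $(3)\Rightarrow(1)$, and $(1)\Rightarrow(4)\Rightarrow(5)$.

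The implication $(5)\Rightarrow(1)$, however, is genuinely broken, and since it is the only arrow leading out of $(5)$, the equivalence does not close. Every fact you invoke there is a fact about \emph{finitely generated} modules: that the rank of a dual equals the rank of the module, that $\dim_F(M\otimes_R F)$ bounds the rank from above (Nakayama), and that rank equal to minimal number of generators forces freeness. But $\Omega_{R/F_0}$ is not finitely generated for the rings this theorem is about. Already in the model case $R=F[[x_1,\dots,x_n]]$ with $F_0=F$ --- where all five conditions visibly hold --- one has $\Omega_{R/F}\otimes_R\Frac(R)\cong\Omega_{\Frac(R)/F}$, whose dimension (in characteristic $0$) is $\mathrm{trdeg}_F\Frac(R)$, and this exceeds $n$: for instance $e^{x_1}\in R$ is transcendental over $F(x_1,\dots,x_n)$, and in fact the transcendence degree is infinite. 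Conceptually, the universal derivation $d$ is not $m$-adically continuous, so $d(e^{x_1})\neq e^{x_1}\,dx_1$ in $\Omega_{R/F}$. Hence your assertion that $(5)$ gives $\Rank\Omega_{R/F_0}=n$ is false --- the dual $\Der_F(R)$ has rank $n$ while $\Omega_{R/F}$ has infinite rank, exactly the failure of dual-rank equality for non-finitely-generated modules --- and your conclusion that $\Omega_{R/F_0}$ is free on $dx_1,\dots,dx_n$ fails precisely in the case where the theorem is trivially true. The second fundamental sequence does give $\dim_F(\Omega_{R/F_0}\otimes_R F)\le n$, but without finite generation this bounds neither the number of generators nor the rank, so nothing ``collapses to equality.'' A correct proof of $(5)\Rightarrow(1)$ has to work directly inside $\Der_F(\hat R)=\bigoplus_i\hat R\,\partial/\partial x_i$, where the genuine difficulty is upgrading $\det(D_ix_j)\neq 0$ (which is all that rank $n$ gives) to $\det(D_ia_j)\notin m$ for a suitable choice of the $a_j$; alternatively one could try to run your differential-module argument with a universally finite (continuous) differential module in place of $\Omega_{R/F_0}$, but the existence of such an object, finitely generated and dual to $\Der_{F_0}(R)$, for a non-complete $R$ of this generality is not available off the shelf and would itself need proof.
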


\begin{Hyp}\label{H}
From now on, we will consider a commutative Noetherian regular ring $R$ with unity that contains a field, $F$, of characteristic zero 
satisfying: 
\begin{itemize}
\item[\rm{(1)}] $R$ is equidimensional of dimension $n$;
\item[\rm{(2)}] every residual field with respect to a maximal ideal is an algebraic extension of $F$;
\item[\rm{(3)}] $\Der_F (R)$ is a finitely generated projective $R$-module of rank $n$ such that $R_m\otimes_R \Der_F (R)=\Der_F(R_m)$.
\end{itemize}
\end{Hyp}
This hypothesis is inspired by the properties (i), (ii) and (iii) ($1.1.2$) in \cite{MeNa}. There, $R$ is a commutative Noetherian 
regular ring that contains a field, $F$, of characteristic zero satisfying $(1)$, $(2)$, but instead of ($3$) in Hypothesis \ref{H},
there exist $F$-linear derivations $\partial_1,\ldots, \partial_n \in \Der_{F_0} (R)$ and $a_1\ldots, a_n\in R$ such that 
$\partial_i a_j=1$ if $i=j$ and 
$0$ otherwise. In our hypothesis, part ($3$) includes more rings; 
for instance,  Remark \ref{Projective} gives an example of a ring that satisfies Hypothesis \ref{H} but not ($1.1.2$) in \cite{MeNa}. 
However, when $R$ a local ring the  properties  are the same by Theorem \ref{99}.
\begin{Rem}
Every regular finitely generated algebra over the complex numbers, $R$, satisfies Hypothesis \ref{H}. This is because, by Theorem $8.8$ \cite{Hartshorne}, $\Der_\CC(R)=\Hom_R (\Omega_{R/\CC}, R)$
and $\Omega_{R/\CC}$ is a
projective module such that $\Rank(\Omega_{R_m/\CC})=\Dim(R)$  for every maximal ideal $m\subset R$. 
\end{Rem}

\begin{Prop}\label{Ex1}
Let $R$ be a commutative Noetherian 
regular ring that contains a field, $F$, of characteristic zero satisfying $(1)$, $(2)$, and such that
there exist $F$-linear derivations $\partial_1,\ldots, \partial_n \in \Der_{F_0} (R)$ and $a_1\ldots, a_n\in R$ such that $\partial_i a_j=1$ if $i=j$ and 
$0$ otherwise.  Then, $R$ satisfies Hypothesis \ref{H}.
\end{Prop}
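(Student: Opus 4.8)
The plan is to prove the only nontrivial clause, namely part (3) of Hypothesis \ref{H}, since (1) and (2) are among the hypotheses of the Proposition. The whole point is that the global data $\partial_1,\dots,\partial_n\in\Der_F(R)$ together with $a_1,\dots,a_n\in R$ satisfying $\partial_i a_j=\delta_{ij}$ already force $\Der_F(R)$ to be a free $R$-module with basis $\partial_1,\dots,\partial_n$; once this is shown, projectivity of rank $n$ is immediate and the localization identity follows from freeness on both sides.

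First I would record the easy half. The map $\phi\colon R^n\to\Der_F(R)$ sending $(r_1,\dots,r_n)\mapsto\sum_i r_i\partial_i$ is injective: if $\sum_i r_i\partial_i=0$, evaluating on $a_j$ gives $r_j=\sum_i r_i(\partial_i a_j)=0$. Dually, the map $\psi\colon\Der_F(R)\to R^n$, $D\mapsto(Da_1,\dots,Da_n)$, satisfies $\psi\circ\phi=\mathrm{id}$, so $\phi$ is a split monomorphism. The content lies entirely in proving that $\phi$ is surjective, which I would verify locally at each maximal ideal.

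Fix a maximal ideal $m$. Since $R$ is regular, $R_m$ is regular local of dimension $n$ (here equidimensionality (1) is used), and by (2) its residue field is algebraic over $F$, hence separable since $\mathrm{char}\,F=0$. The localized derivations $\partial_i$ lie in $\Der_F(R_m)$ and still satisfy $\partial_i a_j=\delta_{ij}$, so the second condition in the list of Theorem \ref{99} holds and therefore $\Der_F(R_m)$ is free of rank $n$. Writing $\phi_m\colon R_m^n\to\Der_F(R_m)$ and $\psi_m\colon D\mapsto(Da_j)_j$ for the corresponding maps over $R_m$, we again have $\psi_m\circ\phi_m=\mathrm{id}$, so $\Der_F(R_m)=\operatorname{Im}\phi_m\oplus\ker\psi_m$. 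Since $\Der_F(R_m)$ is free of rank $n$ and $\operatorname{Im}\phi_m\cong R_m^n$, a rank count over the local ring $R_m$ (or Nakayama after tensoring with the residue field) forces $\ker\psi_m=0$. Thus $\phi_m$ is an isomorphism and $\partial_1,\dots,\partial_n$ is an $R_m$-basis of $\Der_F(R_m)$.

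To conclude globally, given $D\in\Der_F(R)$ set $D'=D-\sum_j(Da_j)\partial_j$, which annihilates every $a_i$ by the identity $\partial_i a_j=\delta_{ij}$. For each maximal $m$ the unique extension of $D'$ to $R_m$ lies in $\Der_F(R_m)$, is an $R_m$-combination of the basis $\partial_1,\dots,\partial_n$, and kills every $a_j$, hence is zero by the displayed pairing; since an element of $R$ that vanishes in every localization $R_m$ is zero, we get $D'=0$, i.e. $D=\sum_j(Da_j)\partial_j\in\operatorname{Im}\phi$. Therefore $\phi$ is an isomorphism, so $\Der_F(R)$ is free and in particular finitely generated projective of rank $n$. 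Finally, freeness gives $R_m\otimes_R\Der_F(R)=R_m^n$, which is identified with $\Der_F(R_m)=R_m^n$ through the common basis $\partial_1,\dots,\partial_n$, establishing the last clause of (3). The main obstacle is precisely the local step: Theorem \ref{99} is needed to know that $\Der_F(R_m)$ has rank exactly $n$, because split-injectivity of $\phi_m$ by itself only exhibits $R_m^n$ as a direct summand, and it is the rank bound that collapses the complement to zero.
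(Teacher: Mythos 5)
Your proof is correct and follows essentially the same route as the paper, whose entire proof is the one-line citation ``This follows from Theorem \ref{99}'' (with details deferred to Remark $2.2.5$ in \cite{MeNa}): you apply Theorem \ref{99} at each localization $R_m$ to get freeness of $\Der_F(R_m)$ of rank $n$, and then globalize via the split pairing $\partial_i a_j=\delta_{ij}$ to conclude that $\Der_F(R)$ is free on $\partial_1,\dots,\partial_n$ and that localization commutes with it. Your write-up simply supplies the details the paper leaves implicit, including the correct use of equidimensionality and condition (2) to make Theorem \ref{99} applicable at every maximal ideal.
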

\begin{proof}
This follows from Theorem \ref{99}.
\end{proof}
A proof of Proposition \ref{Ex1}, along with several consequences, is contained in Remark $2.2.5$ in \cite{MeNa}.
\begin{Teo}\label{Ex2}
Let  $S$ be a commutative  Noetherian regular
ring that contains a field, $F$, of characteristic zero satisfying Hypothesis \ref{H}. If there is an element $f\in S$
such that $R=S/fS$ is a regular ring, then $R$ satisfies Hypothesis \ref{H}.
\end{Teo}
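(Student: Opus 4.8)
The plan is to verify the three clauses of Hypothesis~\ref{H} for $R=S/fS$, the real content being clause (3). We may assume $f$ is a non-unit and a nonzerodivisor; otherwise $R=0$ or $R=S$ and there is nothing to prove. Then $\Dim R=n-1$, and the idea is to realize $\Der_F(R)$ as an explicit quotient of a module manufactured from $\Der_F(S)$, avoiding $\Omega_{S/F}$ (which need not be finitely generated for the rings in question).

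Clauses (1) and (2) are routine. The maximal ideals of $R$ are the images of the maximal ideals $m\subset S$ containing $f$, and $R/\overline m\cong S/m$, which is algebraic over $F$ by the corresponding property of $S$; this is (2). For (1), $S$ is regular, hence Cohen--Macaulay and universally catenary, so Krull's principal ideal theorem gives $\Ht q=1$ for every minimal prime $q$ of $fS$, and equidimensionality of $S$ then forces $\Dim(S/q)=n-1$ for each such $q$; as these are the minimal primes of $R$, clause (1) follows.

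For clause (3) I introduce the module of derivations tangent to $f$,
\[
T(f):=\{\,D\in\Der_F(S):D(f)\in fS\,\}=\Ker\!\big(\Der_F(S)\xrightarrow{\,D\mapsto\overline{D(f)}\,}R\big).
\]
Fix a maximal ideal $m\supseteq f$. By Theorem~\ref{99} and clause (3) for $S$, $\Der_F(S_m)=S_m\otimes_S\Der_F(S)$ is free on a basis $\partial_1,\dots,\partial_n$; since $R$ is regular, $f\notin m^2$, so some $\partial_j(f)$ is a unit, and after a unimodular change of basis we may assume $\partial_n(f)$ is a unit while $\partial_i(f)=0$ for $i<n$. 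In particular $D\mapsto\overline{D(f)}$ is surjective, giving a short exact sequence
\[
0\to T(f)\to\Der_F(S)\xrightarrow{\,D\mapsto\overline{D(f)}\,}R\to 0 .
\]
As $\pd_S R=1$ (because $f$ is a nonzerodivisor) and $\Der_F(S)$ is projective, $T(f)$ is a projective $S$-module. Every $D\in T(f)$ descends to a derivation of $R$, yielding an $R$-linear map $\Psi\colon T(f)\to\Der_F(R)$; since $f$ is a nonzerodivisor, a derivation of $S$ sending $S$ into $fS$ is precisely one of the form $fD'$, so $\Ker\Psi=f\Der_F(S)$ and $\Psi$ induces an injection $\overline\Psi\colon T(f)/f\Der_F(S)\hookrightarrow\Der_F(R)$.

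The crux is surjectivity of $\Psi$, that is, lifting every $\overline D\in\Der_F(R)$ to a tangent derivation of $S$. Locally this is immediate: in the basis above the classes $\overline{\partial_1},\dots,\overline{\partial_{n-1}}$ form a free basis of $\Der_F(R_m)$ by Theorem~\ref{99} (their values on a regular system of parameters of $R_m$ give the identity matrix), so writing $\overline D=\sum_{i<n}\overline{h_i}\,\overline{\partial_i}$ and lifting coefficients gives $D=\sum_{i<n}h_i\partial_i\in T(f)_m$ with $D\equiv\overline D\pmod f$. Two local lifts differ by a derivation carrying $S$ into $fS$, i.e. by a section of the coherent sheaf associated with $f\Der_F(S)\cong\Der_F(S)$, so the obstruction to patching them lies in $H^1(\Spec S,\widetilde{\Der_F(S)})$, which vanishes since $\Spec S$ is affine. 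I expect this globalization of the lifting problem to be the main difficulty, and affineness of $\Spec S$ is exactly what dissolves it. Hence $\Psi$ is surjective and $\Der_F(R)\cong T(f)/f\Der_F(S)$ is finitely generated. Localizing at $\overline m$ and using the basis above, $T(f)_m/f\Der_F(S)_m$ is free of rank $n-1$ over $R_m$ (the $\partial_n$-summand dies), so $\Der_F(R)$ is locally free of rank $n-1$, hence finitely generated projective of rank $n-1$. The same local identification, matched against the basis $\overline{\partial_1},\dots,\overline{\partial_{n-1}}$ of $\Der_F(R_{\overline m})$ from Theorem~\ref{99}, shows the natural map $R_{\overline m}\otimes_R\Der_F(R)\to\Der_F(R_{\overline m})$ is an isomorphism, completing clause (3) and the proof.
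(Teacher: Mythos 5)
Your proof follows the paper's construction exactly where it matters: your $T(f)$ is the paper's $\Ker\varphi_f$, your quotient $T(f)/f\Der_F(S)$ is the paper's module $N$, and your local analysis at a maximal ideal $m$ containing $f$ --- a basis of $\Der_F(S_m)$ adapted to $f$, available by Theorem \ref{99} because regularity of $R$ forces $f\notin m^2$ --- is the paper's computation with a regular system of parameters $y_1=f,y_2,\dots,y_n$ and dual derivations $\delta_i$. You diverge only at the globalization step. The paper localizes $\psi\colon N\to\Der_F(R)$ at every maximal ideal and argues that since the composite $N_{\overline m}\to\Der_F(R)_{\overline m}\xrightarrow{\,i_{\overline m}\,}\Der_F(R_{\overline m})$ is an isomorphism, both maps are, whence $\psi$ is an isomorphism; you instead prove injectivity outright from the identity $\Ker\Psi=f\Der_F(S)$ (correct, since $f$ is a nonzerodivisor) and prove surjectivity by \v{C}ech patching of local lifts, killing the obstruction with $H^1(\Spec S,\widetilde{\Der_F(S)})=0$. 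Both finishes conceal the same small gap: your patching needs the stalk-level lifts produced by Theorem \ref{99} to spread out to sections over an actual open cover, and the paper's needs $i_{\overline m}$ to be injective; either reduces to the observation that a derivation of $R$ (or of $R_g$) whose image in $R_{\overline m}$ vanishes is killed by a single element outside $\overline m$ --- true because the ideal generated by its values is finitely generated over the Noetherian ring $R$, but not formal, since $\Omega_{R/F}$ need not be finitely generated for the rings in question. Net effect: your route buys a transparent injectivity proof, but the $H^1$ detour is heavier than the paper's direct local--global check, which moreover delivers the isomorphism $R_{\overline m}\otimes_R\Der_F(R)\cong\Der_F(R_{\overline m})$ required by clause (3) in the same stroke. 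One last nit: your opening reduction (``otherwise $R=0$ or $R=S$'') is not exhaustive when $\Spec S$ is disconnected, since $f$ can then be a zerodivisor, vanishing identically on some components and not on others, without $R$ being $0$ or $S$; the paper's proof makes the same tacit assumption, so this is a shared blemish rather than a defect of your argument alone.
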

\begin{proof} 
We have that property ($1$) holds because $\dim S-1=\dim S_\eta-1=\dim R_m$ for every maximal ideal $m=\eta R\subset R$, where
$\eta\subset S$ is a maximal ideal of $S$ containing $fS$.
In addition, property ($2$) holds because every residual field of $R$ is a residual field of $S$.

We only need to prove property ($3$). Let $n=\dim(S)$. 
For every maximal ideal $\eta\subset S$ containing $fS$, we may pick a regular system of 
parameters, $y_1,\ldots,y_{n}$ for $S_\eta$ such that $y_1=f$. Then, by Theorem
 \ref{99}, there exist $\delta_i\in\Der_F (S_\eta)$
such that $\delta_i (y_j)=1$ if $i=j$ and zero otherwise; moreover, $\Der_F (S_\eta)$ is a free $S_\eta$-module of rank $n$ generated by
$\delta_1,\dots \delta_n$. 

Let $\varphi_f:\Der_F (S)\to R$ be the morphism defined by $\partial \to [\partial (f)]$, where $[\partial (f)]$ represents
 the class of  $\partial (f)$ in $R$.
Then, $S_\eta \otimes_S \Ker(\varphi_f)$ is isomorphic to 
$\{\delta \in \Der_F (S_\eta) : \delta(f)\in f\cdot S_\eta \}=S_\eta f\delta_1 \oplus S_\eta\delta_2\oplus\ldots\oplus S_\eta \delta_{n}$

Noticing that $f\cdot \Der_F (S)\subset \Ker\varphi_f$,
we define $N=\Ker\varphi_f/ (f\cdot\Der_F(S))$ and point out that it is a finitely generated $R$-module. Let $m=\eta R$. Then, 
$R_m \otimes_R N=R_m \delta_2\oplus\ldots\oplus R_m \delta_{n}=\Der_F (R_m)$, where the last equality uses Theorem \ref{99}.

We have a morphism $\psi:N\to \Der_F (R)$ defined by taking $\psi[\partial](r)=[\partial (r)]$, 
which is well defined by the definition of $N$. For every maximal ideal $m\subset R$, there is a natural morphism
$i_m:R_m\otimes_R \Der_F(R)\to \Der_F (R_m)$. We notice $( i_m\circ 1_{R_m}\otimes\psi )$ is an isomorphism between 
$R_m \otimes_R N$ and $\Der_F (R_m)$ for all maximal $m\subset R$. Therefore, 
$N_m\cong R_m\otimes \Der_F(R)\cong \Der_F (R_m)$ for all maximal $m\subset R$. Hence, $\psi$ is an isomorphism.
\end{proof}
\begin{Rem}\label{Projective}
It is worth pointing out that there are examples were $R$ satisfies Hypothesis \ref{H} but $\Der_{F} (R)$ is not free. 
Let $S=\RR[x,y,z]$ be the polynomial ring in three variables and coefficients over $\RR$. Let $f=x^2+y^2+z^2-1\in S$. Then, 
$R=S/fS$, the coordinate ring associated to the sphere, satisfies Hypothesis \ref{H} but $\Der_{\RR} (R)$ is not free. 
Let $\phi: R^3\to R$ be the morphism given by 
$(a,b,c)\to (ax,by,cz)$. 
Thus, $\Der_{\RR} (R)=Ker(\phi)$ by the proof of Theorem  \ref{Ex2}. Therefore, $\Der_{\RR} (R)$ is the projective module corresponding to  the tangent bundle of the sphere, and so it is not free. This example also shows that the conclusion of Theorem \ref{Ex2} does not hold for properties  (i), (ii) and (iii) ($1.1.2$) in \cite{MeNa}. In that sense, Hypothesis \ref{H} behaves better under regular subvarieties.

\end{Rem}
\begin{MainEx}\label{MainEx}
Let $(V,\pi V,K)$ be a DVR of mixed characteristic $p>0$, and let $F$ denote its fraction field. Let 
$S=V[[x_1,\ldots,x_{n+1}]]\otimes_V F$ be the tensor product of the power series ring with coefficients in $V$ and $F$. Let
$R=S/(f)S$
be a regular ring where $f=\pi-h$ for an element $h$ in the square of maximal ideal of $V[[x_1,\ldots,x_{n+1}]]$.
Then, $R$ satisfies Hypothesis \ref{H}.
\end{MainEx}
\begin{proof}
Since $S$ is as in Proposition  \ref{Ex1}  \cite{LyuUMC} pages $5880-5881$) and $\pi-h\in S$ is a regular element, we have that
$R$ satisfies Hypothesis \ref{H} by Theorem \ref{Ex2}.
\end{proof}

Let $F$ be a field of characteristic $0$ and $R$ a commutative Noetherian ring with unity containing $F$. We denote by $D(R,F)$ the ring of $F$-linear differential operators of $R$. This is a subring of
$\Hom_F (R,R)$ defined inductively as follows. The differential operators of order zero are 
the morphisms induced by multiplying by  elements in $R$.
An element $\theta \in \Hom_F(R,R)$ is a differential operator of order less than or equal to $j+1$ if $[\theta,r]:=\theta\cdot r -r\cdot\theta$
is a differential operator of order less than or equal to $j$. We have an induced filtration 
$\Gamma=(\Gamma^j)$ on $D(R,F)$ given by $\Gamma^j=\{\theta\in D(R,F) \mid \Ord(\theta)\leq j\}$. As a consequence of the definition, we
 have that $\Gamma_j\Gamma_i\subset \Gamma_{j+i}$ and that $\gr^\Gamma (D(R,F))=\oplus^\infty_{j=0} \Gamma^j / \Gamma^{j-1}$ is a commutative 
ring.

An example is given by a commutative Noetherian regular ring $R$ with unity that contains a field, $F$, of characteristic $0$, 
as in Proposition \ref{Ex1}. In this case, $D(R,F)=R[\partial_1,\ldots,\partial_n]\subset \Hom_F(R,R)$; moreover,
 $\gr^\Gamma(D(R,F))=R[y_1,\ldots,y_n]$ the polynomial ring with coefficients on $R$ and variables $y_1,\ldots y_n$ and
$\wgd(D(R,F))=\dim(R)$ (cf. Main Theorem in \cite{Bj2}, ($1.1.3$) and Theorem $1.1.4$ in \cite{MeNa}, and Theorem $2.17$ in \cite{Na}).
We would like to have similar properties for $D(R,F)$ and $\gr^\Gamma (D(R,F))$ when $R$ satisfies Hypothesis \ref{H}. 

We will denote by 
$D$ the subalgebra of $\Hom_F (R,R)$ generated by $R$ and $\Der_{F} (R)$, where $R=\Hom_R(R,R)\subset \Hom_F (R,R)$. 
We define an ascending filtration $\Gamma'_j$ of $R$-modules in $D$ inductively as follows. 
$\Gamma'_0=R$. Given $\Gamma'_j$, we take 
$\Gamma'_{j+1}$ as the  Abelian additive group generated by $\{\Gamma'_j,\Der_{F} (R) \cdot \Gamma'_j\}$. 
Since $\Gamma'_j$ is generated by multiplying
derivations, we have that for every $\delta\in\Gamma'_i$ and $f\in R$, $[\delta, f]=f\delta-\delta f\in \Gamma'_{j-1}$.
Therefore,
$\Gamma'_j$ is an $R$-submodule
of $D$ with respect to the structures induced by multiplication by the left or by the right. Additionally, $D\subset D(R,F)$ and 
$\Gamma'_j\subset \Gamma_j$ because $\Der_{F} (R)\subset \Gamma_1$.\\

We have that for every $s\in R$, 
$\Ad_s :D(R,A)\to D(R,A)$, defined by $\Ad_s(\delta)=s\delta-\delta s$, is nilpotent.
Let $m\subset R$ be a maximal ideal and $S=R\setminus m$ be the induced multiplicative system. Then, $S$ is a multiplicative set 
satisfying the Ore condition on the left and on the right in $D(R,A)$ and, as a consequence, in $D$. 
Hence, $S^{-1} D(R,F)$ 
and $S^{-1} D$ exist as filtered rings. 

\begin{Prop}\label{Dequal}
With the same notation as above, $D(R,F)=D$ as filtered rings.
\end{Prop}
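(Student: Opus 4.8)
The plan is to establish the two reverse inclusions to those already recorded in the excerpt, namely $\Gamma_j\subseteq\Gamma'_j$ for every $j$. Since $D(R,F)=\bigcup_j\Gamma_j$ and $D=\bigcup_j\Gamma'_j$, and both ring structures are inherited from $\Hom_F(R,R)$, this simultaneously gives $D(R,F)\subseteq D$ and the equality of the two filtrations, hence equality as filtered rings. Each containment $\Gamma'_j\subseteq\Gamma_j$ is an inclusion of $R$-submodules of $\Hom_F(R,R)$, so it suffices to show that the $R$-module $Q_j:=\Gamma_j/\Gamma'_j$ vanishes. I would invoke the elementary fact that an $R$-module is zero as soon as all of its localizations at maximal ideals vanish; the whole statement then reduces to checking, for each maximal ideal $m\subset R$ with $S=R\setminus m$, that $S^{-1}\Gamma_j=S^{-1}\Gamma'_j$.

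First I would compute the right-hand side. Localization is compatible with the inductive construction of the filtration $\Gamma'$, and by Hypothesis \ref{H}, part $(3)$, one has $S^{-1}\Der_F(R)=R_m\otimes_R\Der_F(R)=\Der_F(R_m)$. Hence $S^{-1}D$ is the $R_m$-subalgebra of $\Hom_F(R_m,R_m)$ generated by $\Der_F(R_m)$, and $S^{-1}\Gamma'_j$ is its $j$-th filtration step. Since $R_m$ is regular local with residue field an algebraic (hence, in characteristic $0$, separable) extension of $F$, Theorem \ref{99} applies: $\Der_F(R_m)$ is free with basis $\partial/\partial x_1,\ldots,\partial/\partial x_n$ coming from a regular system of parameters, so $S^{-1}\Gamma'_j=\bigoplus_{|\alpha|\leq j}R_m\,\partial^\alpha$.

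Next I would identify the left-hand side. Because $\Ad_s$ is nilpotent for each $s\in R$, the set $S$ satisfies the Ore condition and the localized filtered ring $S^{-1}D(R,F)$ exists, as recorded above; the natural comparison map identifies it with $D(R_m,F)$ as filtered rings, so that $S^{-1}\Gamma_j$ becomes the module of $F$-linear differential operators on $R_m$ of order at most $j$. Invoking the structure of the ring of differential operators on such a regular local ring (Theorem \ref{99} together with the cited theorem of Bj\"ork in \cite{Bj2}), this module is again $\bigoplus_{|\alpha|\leq j}R_m\,\partial^\alpha$. Comparing the two computations yields $S^{-1}\Gamma_j=S^{-1}\Gamma'_j$ for every $m$, whence $Q_j=0$, and the proposition follows.

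I expect the main obstacle to be the identification $S^{-1}D(R,F)\cong D(R_m,F)$ as filtered rings, that is, the assertion that differential operators genuinely commute with this localization and that the order filtration is preserved under it. This is precisely where the nilpotence of $\Ad_s$ and the Ore condition set up in the excerpt do the real work. It is also the point at which the projectivity, rather than freeness, of $\Der_F(R)$ is handled: passing to $R_m$ and using Hypothesis \ref{H}, part $(3)$, reduces the global, non-free situation to the classical free local case, where the explicit polynomial structure in the $\partial_i$ is available.
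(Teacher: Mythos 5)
Your proposal is correct and follows essentially the same route as the paper: localize at each maximal ideal, use Hypothesis \ref{H}(3) together with Theorem \ref{99} (and the cited local structure theory) to identify $S^{-1}\Gamma_j$ with $S^{-1}\Gamma'_j$ via $D(R_m,F)$, then conclude globally by the local-global principle. The paper's proof is just a compressed version of this argument; your write-up makes explicit the vanishing of the quotient modules $\Gamma_j/\Gamma'_j$ and the filtered identification $S^{-1}D(R,F)\cong D(R_m,F)$, which the paper leaves implicit.
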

\begin{proof}
Let $m\subset R$ be a maximal ideal and $S=R\setminus m$ be the induced multiplicative system.
We have that $S^{-1}\Gamma_j=S^{-1}\Gamma'_j$ by condition ($3$) in Hypothesis \ref{H}. Therefore,
$S^{-1} D=R_m[\Omega_{R_m,F}]=D(R_m,F)=S^{-1}D(R,F)$ as filtered rings. 
\end{proof}

For simplicity, we will denote $D(R,F)$ by $D$ and $(R\setminus m)^{-1} D(R,F)$ by $D_m$ for a maximal ideal $m\subset R$. 
We note that the inclusion $D\to D_m$ induces an inclusion $ \gr^\Gamma (D)\to  
\gr^{\Gamma_m}(D_m)$ of rings, and
$\gr^{\Gamma_m}(D_m)=R_m\otimes_R \gr^\Gamma (D)$. If $M$ is a left or right finitely generated $D$-module with a good filtration $\Pi$,
then $D_m\otimes_{D} M$ or $M\otimes_{D} D_m$, respectively,  has a filtration given by $R_m\otimes_R\Pi$ and 
$\gr^{\Pi_m} (D_m\otimes_{D} M)=R_m\otimes_R \gr^\Pi (M)$. 

We have that
$D_m$ is a left and right flat module over $D$, and
$D_m\otimes_D D_m \cong R_m\otimes_{R} D \cong   D\otimes_{R} R_m  \cong D_m$. If $M$ is a left or right 
finitely generated $D$-module, there exist a canonical isomorphism
$\Ext^i_{D_m} ( M_m,D_m)\cong S^{-1}\Ext^i_{D} ( M,D) \cong R_m\otimes_R \Ext^i_{D} ( M,D)$ 
for every $i\in \NN$.

We have, by Theorem \ref{99}, that for every maximal ideal $m\subset R$ there exist elements $x_1,\ldots,x_d\in R_m$ and
$F$-linear derivations $\partial_1,\ldots,\partial_d \in \Der_F (R_m)$ such that 
$\partial_i(x_j)=1$ if $i=j$ and zero otherwise. Therefore, $\wgd (D_m)=n$ and  $gr^{\Gamma_m}(D_m)=R[y_1,\ldots,y_n]$ is 
the polynomial ring with $n$ variables
and coefficients in $R_m$  \cite{Bj2}.

We recall the definition of a ring of differentiable type (cf. ($1.1$) in \cite{MeNa}).
\begin{Def}
A filtered ring $A$ is a \emph{ring of differentiable type} if its associated graded ring is commutative
Noetherian regular with unity and pure graded dimension.
\end{Def}
\begin{Teo}\label{GradedRing}
$(D,\Gamma)$ is a ring of differentiable type such that $\gr^\Gamma(D)$ is a ring of pure graded dimension $2n$.
\end{Teo}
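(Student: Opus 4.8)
The plan is to verify the two defining properties of a ring of differentiable type for $(D,\Gamma)$: that $\gr^\Gamma(D)$ is commutative Noetherian regular with unity, and that it has pure graded dimension equal to $2n$. The essential tool is the local-to-global transition supplied by the preceding discussion, where for each maximal ideal $m\subset R$ we have the identifications $\gr^{\Gamma_m}(D_m)=R_m\otimes_R\gr^\Gamma(D)$ and, via Theorem \ref{99}, the explicit presentation $\gr^{\Gamma_m}(D_m)=R_m[y_1,\ldots,y_n]$. The strategy is therefore to establish each ring-theoretic property of $\gr^\Gamma(D)$ by checking it after localizing at every maximal ideal, where the structure becomes that of a polynomial ring over a regular local ring and is thus transparent.

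First I would record that $\gr^\Gamma(D)$ is commutative, which is immediate since $\gr^\Gamma(D)\subset\gr^\Gamma(D(R,F))$ by Proposition \ref{Dequal} and the latter is commutative by the remarks following the definition of $\Gamma$; it carries a unity inherited from $R=\Gamma^0$. Next I would address regularity. Regularity is a local property: a commutative Noetherian ring is regular if and only if all its localizations at maximal ideals are regular local rings. Since the maximal ideals of $\gr^\Gamma(D)$ lie over maximal ideals of $R$ (because $\gr^\Gamma(D)$ is generated over $R$ by the degree-one part), it suffices to show $\gr^{\Gamma_m}(D_m)=R_m[y_1,\ldots,y_n]$ is regular for each maximal $m$. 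But $R_m$ is a regular local ring by Hypothesis \ref{H}, and a polynomial ring in finitely many variables over a regular ring is regular; hence each localization is regular, giving regularity of $\gr^\Gamma(D)$. Noetherianity follows the same way, or directly from the fact that $R$ is Noetherian and $\gr^\Gamma(D)$ is, locally, a finitely generated $R_m$-algebra.

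For the pure graded dimension, the point is that the Krull dimension of $R_m[y_1,\ldots,y_n]$ equals $\dim(R_m)+n=n+n=2n$, using that $R$ is equidimensional of dimension $n$ by Hypothesis \ref{H}(1), so $\dim R_m=n$ for every maximal ideal $m$. Purity amounts to checking that this dimension is the same across all maximal ideals, which is guaranteed precisely by equidimensionality: the rank of the projective module $\Der_F(R)$ is uniformly $n$, and $\dim R_m=n$ uniformly, so the local graded dimension is constantly $2n$. I would phrase this by noting that the grading is respected under localization and that $\gr^\Gamma(D)$ has the same graded dimension at every graded maximal ideal.

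The main obstacle I anticipate is not any single deep computation but rather the bookkeeping needed to pass cleanly from the local descriptions to a global statement about $\gr^\Gamma(D)$ itself, since $\Der_F(R)$ is only projective, not free, and so $\gr^\Gamma(D)$ need not be a global polynomial ring $R[y_1,\ldots,y_n]$. The correct formulation is that $\gr^\Gamma(D)$ is the symmetric algebra $\Sym_R(\Der_F(R))$ of the projective module $\Der_F(R)$; this is what localizes to $R_m[y_1,\ldots,y_n]$ and what the filtration $\Gamma'$ built by repeatedly applying derivations produces at the associated-graded level. Once this identification $\gr^\Gamma(D)\cong\Sym_R(\Der_F(R))$ is in hand, all three properties, commutativity, regularity with unity, and pure graded dimension $2n$, follow from the standard fact that the symmetric algebra of a rank-$n$ projective module over a regular Noetherian ring is again regular Noetherian of relative dimension $n$. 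Verifying that the local isomorphisms glue to this global symmetric-algebra description, compatibly with the filtration, is the step requiring the most care.
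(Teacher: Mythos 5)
Your overall architecture---commutativity from the definition of the filtration, a local-to-global reduction to $\gr^{\Gamma_m}(D_m)=R_m[y_1,\ldots,y_n]$ for regularity and for pure graded dimension $2n$, and the identification $\gr^\Gamma(D)\cong\Sym_R(\Der_F(R))$ to handle global finiteness---is essentially the paper's proof; the symmetric-algebra description is exactly Remark \ref{GradedSym}, and the paper gets Noetherianity equivalently from the surjection $R[z_1,\ldots,z_m]\to\gr^\Gamma(D)$, $z_i\mapsto[\partial_i]$, induced by a finite set of $R$-module generators of $\Der_F(R)$. However, two of your local-to-global justifications are wrong as stated. First, it is false that the maximal ideals of $\gr^\Gamma(D)$ lie over maximal ideals of $R$: generation over $R$ in degree one does not force this (in $\CC[x][y]$ the maximal ideal $(xy-1)$ contracts to $(0)$ in $\CC[x]$). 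The contraction statement is true only for \emph{homogeneous} maximal ideals---and that is precisely the claim the paper proves and uses in the pure-dimension step, where it is legitimate because pure graded dimension is tested at graded maximal ideals. For regularity the repair is the paper's argument: take an \emph{arbitrary prime} $Q\subset\gr^\Gamma(D)$, choose any maximal ideal $m\subset R$ containing $Q\cap R$, and note that $\gr^\Gamma(D)_Q$ is a localization of the regular ring $\gr^\Gamma(D)_m=R_m[y_1,\ldots,y_n]$; no claim about contractions of maximal ideals is needed.

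Second, Noetherianity does not ``follow the same way'': it is not a local property (for instance $\prod_{i\in\NN}\FF_2$ is non-Noetherian although every localization at a prime is a field), and ``locally a finitely generated $R_m$-algebra'' does not by itself yield finite generation over $R$. Here your own last paragraph contains the correct route: once $\gr^\Gamma(D)\cong\Sym_R(\Der_F(R))$ is in hand (or, more cheaply, once one observes that finitely many $R$-module generators $\partial_1,\ldots,\partial_m$ of $\Der_F(R)$ give a surjection $R[z_1,\ldots,z_m]\to\gr^\Gamma(D)$, which is how the paper argues), $\gr^\Gamma(D)$ is a finitely generated algebra over the Noetherian ring $R$ and is therefore Noetherian globally. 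With these two repairs your argument is complete and coincides with the paper's.
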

\begin{proof} Let $\gr^\Gamma (D)$ be the associated graded ring. We will prove the proposition by parts.\\
\emph{ $\gr^\Gamma(D)$ is commutative:} This follows from the definition of the filtration $\Gamma$ on $D=D(R,F)$.\\
\emph{ $\gr^\Gamma(D)$ is Noetherian:} Let $\partial_1,\ldots,\partial_m$ be a set of generators for $\Der_{F} (R)$. 
Let $\phi : R[z_1,\ldots,z_m]\to \gr^\Gamma(D)$ be the morphism of commutative $R$-algebras defined by $z_i\to [\partial_i ]$. 
We have, by the definition of $\Gamma'=\Gamma$, that $\phi$ is surjectve. Hence $\gr^\Gamma(D)$ is Noetherian.\\
\emph{ $\gr^\Gamma(D)$ is regular:} Let $Q \subset \gr^\Gamma(D)$ be a prime ideal and $m\subset R$ be a maximal ideal that contains $Q\cap R$. Then
$\gr^\Gamma(D)_Q=(\gr^\Gamma(D)_m)_Q$ which is regular because $\gr^\Gamma(D)_m$ is a polynomial ring over $R_m$. \\
\emph{$\gr^\Gamma(D)$ has pure graded dimension $2n$:} Let $\eta$ be a maximal homogeneous ideal of $\gr^\Gamma(D)$. We claim that $m=\eta\cap R$ is
a maximal ideal of $R$. If not, there exist a maximal ideal $m'\subset R$ strictly containing  $m$. Then, $m'+\eta$ would be a proper ideal
of $\gr^\Gamma(D)$ that strictly contains  $\eta$. Hence,  $\gr^\Gamma(D)_\eta$ is the localization of $\gr^\Gamma(D)_m$ at a  maximal homogeneous ideal, then,
$\Dim(\gr^\Gamma(D)_\eta)=2n$ because $\gr^\Gamma(D_m)$ is a ring of pure graded dimension $2n$.
\end{proof}
\begin{Rem}\label{GradedSym}

Narv\'aez-Macarro \cite{Na} showed that if $S$ is a ring containing a field, $F$, of characteristic $0$ and  $\Der_{F}(S)$ is a projective $S$-modules of finite rank, then $\gr(D(S,F))\cong \Sym(\Der_F (S))$. 
Hence, we have that $\gr(D)\cong\Sym(\Der_F(R))$ by Hypothesis \ref{H}. 
\end{Rem}
\begin{Cor}
$D$ is left and right Noetherian.
\end{Cor}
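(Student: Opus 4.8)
The plan is to deduce left and right Noetherianity of $D$ from the fact, established in Theorem \ref{GradedRing}, that the associated graded ring $\gr^\Gamma(D)$ is commutative Noetherian, by invoking the standard principle that a filtered ring whose associated graded ring is Noetherian is itself Noetherian, provided the filtration is exhaustive and discrete.

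First I would record the relevant properties of the filtration $\Gamma = \Gamma'$. It is ascending, it satisfies $\Gamma^j\Gamma^i \subset \Gamma^{i+j}$, it is exhaustive since $D = \bigcup_{j\geq 0}\Gamma^j$, and it is discrete and separated since $\Gamma^{-1}=0$ and $\Gamma^0 = R$. Crucially, as noted in the construction of $\Gamma'$, each $\Gamma^j = \Gamma'_j$ is an $R$-submodule of $D$ with respect to both left and right multiplication, so the setup is symmetric on the two sides.

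Next I would carry out the lifting argument for a left ideal $I \subset D$. Equip $I$ with the induced filtration $I_j = I \cap \Gamma^j$, so that $\gr^\Gamma(I) = \bigoplus_j I_j/I_{j-1}$ is a graded left ideal of $\gr^\Gamma(D)$. Since $\gr^\Gamma(D)$ is Noetherian, $\gr^\Gamma(I)$ is generated by finitely many homogeneous elements; choosing representatives $\theta_1,\ldots,\theta_r \in I$ with $\theta_k \in \Gamma^{d_k}$ whose symbols generate $\gr^\Gamma(I)$, one shows by induction on filtration degree that $\theta_1,\ldots,\theta_r$ generate $I$ as a left ideal. Indeed, given $\theta \in I_j$, its symbol in degree $j$ lies in $\gr^\Gamma(I)_j$, hence is a combination of the symbols of the $\theta_k$; subtracting the corresponding left $D$-linear combination strictly lowers the filtration degree of $\theta$, and discreteness of the filtration guarantees the process terminates at $\Gamma^{-1}=0$.

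Finally I would observe that the identical argument applies to right ideals, using the right $R$-module structure of the $\Gamma^j$ and the commutativity of $\gr^\Gamma(D)$, which makes left and right graded ideals coincide. Hence $D$ is both left and right Noetherian. The argument is entirely standard once Theorem \ref{GradedRing} is in hand; the only point requiring care is the verification that the filtration is discrete and exhaustive so that the inductive lifting terminates, but this is immediate from $\Gamma^0 = R$ and $D = \bigcup_j \Gamma^j$.
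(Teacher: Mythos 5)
Your proof is correct and takes the same route as the paper: the paper simply cites Proposition $6.1$ of Bj\"ork's \emph{Rings of differential operators}, which is precisely the transfer principle you prove (a ring with an exhaustive, discrete filtration whose associated graded ring is Noetherian is left and right Noetherian), applied to Theorem \ref{GradedRing}. Your write-up just inlines the standard symbol-lifting argument that the paper delegates to the reference.
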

\begin{proof}
This follows from Proposition $6.1$ in \cite{Bjork}.
\end{proof}

\begin{Prop}\label{DimDif}
$\wgd(D)= \Dim (R)$
\end{Prop}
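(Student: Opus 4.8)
The plan is to prove $\wgd(D) = \dim(R)$ by reducing to the local case, where the answer is already recorded in the excerpt, and then assembling the local computations into a global statement about the weak global dimension. The weak global dimension measures flat dimensions, equivalently $\Tor$-vanishing, and the key facts I want to exploit are that weak global dimension can be detected after localization and that $D_m$ has weak global dimension $n = \dim(R)$ for every maximal ideal $m \subset R$, a fact stated just above (coming from the existence of a local coordinate system $x_1,\dots,x_d$ with dual derivations and the Bj\"ork computation \cite{Bj2}).

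First I would establish the upper bound $\wgd(D) \leq n$. Since $\gr^\Gamma(D) \cong \Sym(\Der_F(R))$ by Remark \ref{GradedSym} and is regular Noetherian of pure graded dimension $2n$ by Theorem \ref{GradedRing}, the associated graded ring has finite global dimension, and the standard filtered-to-graded transfer (as in \cite{Bjork}, \cite{Bj2}) gives a finite bound on flat dimensions of filtered $D$-modules. More directly, for any $D$-module $M$ I would localize at each maximal ideal $m \subset R$: because $D_m$ is flat over $D$ and $D_m \otimes_D D_m \cong D_m$, flatness and $\Tor$ localize well, so $\Tor_i^D(M,N)_m \cong \Tor_i^{D_m}(M_m, N_m)$. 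The vanishing $\Tor_{n+1}^{D_m}(-,-) = 0$ for all maximal $m$ (which follows from $\wgd(D_m) = n$) forces $\Tor_{n+1}^D(M,N) = 0$ locally at every maximal ideal of $R$, hence globally, giving $\wgd(D) \leq n$.

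For the lower bound $\wgd(D) \geq n$, I would produce a module witnessing flat dimension exactly $n$. The natural candidate is $R$ itself, viewed as a left $D$-module via the action of differential operators. Localizing at a maximal ideal $m$, the module $R_m$ over $D_m$ is the analogue of the residue/coordinate situation for a polynomial-type Weyl algebra, where one computes $\Tor_n^{D_m}(R_m, R_m) \neq 0$ using the Koszul complex on the commuting derivations $\partial_1,\dots,\partial_d$ (again this is the content of the Bj\"ork computation underlying $\wgd(D_m)=n$). Since localization preserves this nonvanishing, $\Tor_n^D(R,R)$ is nonzero, so $\wgd(D) \geq n$. Combining the two bounds yields $\wgd(D) = n = \dim(R)$.

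The main obstacle will be the careful handling of the localization maps for $\Tor$ in the noncommutative filtered setting: I need to justify that $D_m \otimes_D (-)$ is exact and commutes with $\Tor$ on both the left and right module structures, and that the family of local weak global dimensions genuinely controls the global one even though $D$ is noncommutative and $R$ need not be local. The excerpt already supplies the two ingredients that make this go through, namely the flatness and idempotence of $D \to D_m$ together with the natural isomorphism $\Ext^i_{D_m}(M_m, D_m) \cong R_m \otimes_R \Ext^i_D(M,D)$, whose $\Tor$ analogue is the precise statement I would invoke; verifying that analogue and checking that a single global module attains the maximal flat dimension is where the real work lies.
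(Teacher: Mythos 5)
Your overall strategy---reduce everything to the local computation $\wgd(D_m)=n$ via the flat maps $D\to D_m$---is the same as the paper's, but the homological vehicle you chose does not carry the argument. The central claim of your upper bound, ``$\Tor_i^D(M,N)_m\cong\Tor_i^{D_m}(M_m,N_m)$, so vanishing locally at every maximal ideal forces vanishing globally,'' is not a well-posed statement: $R$ is \emph{not central} in $D$ (derivations do not commute with multiplication by elements of $R$), so for an arbitrary pair of modules the group $\Tor_i^D(M,N)$ is only an $F$-vector space; it has no natural $R$-module structure, and hence no localization ``$(-)_m$.'' What flat base change actually gives is $\Tor_i^{D_m}(M_m,N_m)\cong\Tor_i^D(M,N_m)\cong\Tor_i^D(M_m,N)$: the localization stays inside one of the arguments and never comes out of the Tor. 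So the hypothesis you can legitimately extract from $\wgd(D_m)=n$ is ``$\Tor_{n+1}^D(M,N_m)=0$ for all maximal $m$,'' and deducing $\Tor_{n+1}^D(M,N)=0$ from this is a genuine local-to-global problem for the noncommutative ring $D$ (it can be done, e.g.\ by resolving $N$ by the \v{C}ech-type complex built from the modules $N_{f_{i_1}\cdots f_{i_r}}$ for a finite cover $(f_1,\ldots,f_k)=R$ and dimension-shifting, but that is the substance of the proof, not a verification to be deferred). The same defect infects your lower bound as phrased (``localization preserves this nonvanishing''), though that one is easy to repair: flat base change gives $\Tor_n^D(R,R_m)\cong\Tor_n^{D_m}(R_m,R_m)\neq 0$ directly, and the pair $(R,R_m)$ already witnesses $\wgd(D)\geq n$; there is no need to produce $\Tor_n^D(R,R)\neq 0$.

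This is precisely the trap the paper's proof is engineered to avoid. It first converts $\wgd(D)$ into a statement about $\Ext^j_D(M,D)$ for finitely generated $M$, using that $D$ is left and right Noetherian (so $\wgd(D)$ equals the global dimension, Theorem $8.27$ in \cite{Rot}) and that $D$ is of differentiable type. The point of taking $D$ as the second argument is that $\Ext^j_D(M,D)$ inherits a right $D$-module---hence $R$-module---structure from right multiplication on $D$, so localization is meaningful; the isomorphism $R_m\otimes_R\Ext^j_D(M,D)\cong\Ext^j_{D_m}(M_m,D_m)$ was established earlier in the paper, and a nonzero $R$-module has a nonzero localization at some maximal ideal. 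With that asymmetric choice both bounds become the two-line localization argument you were aiming for: $\Ext^j_D(M,D)$ vanishes for $j>n$ because all its localizations do, and $\Ext^n_D(R,D)\neq 0$ because $\Ext^n_{D_m}(R_m,D_m)\neq 0$. If you want to keep a Tor-based formulation, you must either supply the \v{C}ech-resolution argument sketched above or route the computation through $\Ext^{\bullet}_D(-,D)$ as the paper does.
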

\begin{proof}
Since $D$ is  left and right Noetherian, $\wgd(D)=\lpd(D)=\rpd(D)$ by Theorem $8.27$ in \cite{Rot}.  The value to this dimension is 
equal to the maximum integer $j$ such that $\Ext^j_D(M,R)\neq 0$ for some finitely generated 
$D$-module $M$ because $D$ is of differentiable type.  
As $R_m\otimes_R \Ext^j_{D} (M,D)=\Ext^n_{D_m} (M_m,D_m)= 0$ for every maximal ideal $m\subset R$ and integer $j>n$, we have that 
$\Ext^j_{D} (M,D)=0$ for every $D$-module $M$ and for $j>n$. Hence, $\wgd(D)\leq n$.
Likewise, $$R_m\otimes_R \Ext^n_{D} (R,D)=\Ext^n_{D_m} (R_m,D_m)\neq 0$$ for any $m\subset R$, so,
$\Ext^n_{D} (R,D)\neq 0$. Hence $\wgd(D)\geq n$.
\end{proof}

\section{The theory of the Bernstein-Sato polynomial and the Bernstein class of $D$}
Throughout this section we are adapting the results of Mebkhout and Narv\'aez-Macarro  to $R$ and $D$ \cite{MeNa}.
In particular, we show that the existence of the Bernstein-Sato polynomial and that 
the Bernstein class of $D$ is closed under localization at one element.

\begin{Def} 
Let $A$ be a ring of differentiable type.
Let $M\neq 0$ be a finitely generated left or right $A$-module. We define
$$
\Grade_A (M)=\Inf\{j: \Ext^j_{A} (M,A)\neq 0\}.
$$ 
\end{Def}

\begin{Prop}\label{122}  Let $A$ be a ring of differentiable type.
Let $M\neq 0$ be a finitely generated left or right $A$-module. Then,
$$\Dim(M)+\Grade_A (M)=\Dim(\gr^\Gamma(A)).$$ In particular, $\Dim(M)\geq \Dim(\gr(A))-\wgd(A)$. Moreover, we have that
$$\Codim_A (\Ext^i_A(M,A))\geq i$$ for all $i\geq 0$ such that $\Codim_A (\Ext^i_A(M,A))\neq 0$.
\end{Prop}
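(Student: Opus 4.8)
The plan is to reduce the global statement to the local situation $D_m$, where $\gr^{\Gamma_m}(D_m)$ is a polynomial ring over the regular local ring $R_m$ and the classical Auslander–Bernstein machinery applies. The core identity $\Dim(M)+\Grade_A(M)=\Dim(\gr^\Gamma(A))$ is the standard Bernstein inequality for filtered rings whose associated graded ring is commutative Noetherian regular of pure graded dimension; the main work is to transfer each piece between $D$ and its localizations using the flatness and base-change facts established before Theorem \ref{GradedRing}.

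First I would fix a good filtration $\Pi$ on $M$ and recall that $\gr^\Pi(M)$ is a finitely generated $\gr^\Gamma(D)$-module, so $\Dim(M):=\Dim_{\gr^\Gamma(D)}(\gr^\Pi(M))$ is well defined independent of the good filtration. For the grade side, I would use the canonical isomorphism $R_m\otimes_R \Ext^i_D(M,D)\cong \Ext^i_{D_m}(M_m,D_m)$ recorded in the excerpt, together with the compatibility $\gr^{\Pi_m}(D_m\otimes_D M)=R_m\otimes_R \gr^\Pi(M)$. These let me identify both $\Grade_D(M)$ and $\Dim(M)$ after localizing at each maximal $m$: an $\Ext$ group is nonzero iff some localization is nonzero, and dimension of a finitely generated module over $\gr^\Gamma(D)$ is the supremum over maximal homogeneous ideals $\eta$, each of which sits over a maximal ideal $m=\eta\cap R$ by the argument in Theorem \ref{GradedRing}. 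Since $\gr^\Gamma(D)$ has pure graded dimension $2n$, the local computation over $R_m$ gives $\Dim_{R_m}(\gr^{\Pi_m}(M_m))+\Grade_{D_m}(M_m)=2n$ for every $m$ in the support, which I would then patch back to $\Dim(M)+\Grade_D(M)=\Dim(\gr^\Gamma(D))$.

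The inequality $\Dim(M)\geq \Dim(\gr(D))-\wgd(D)$ is then immediate: by definition $\Grade_D(M)\leq \pd_D(M)\leq \wgd(D)$, so substituting into the equality yields the claim, with $\wgd(D)=\Dim(R)=n$ by Proposition \ref{DimDif}. For the last assertion, $\Codim_D(\Ext^i_D(M,D))\geq i$, I would again localize: set $N=\Ext^i_D(M,D)$ and use that $N_m\cong \Ext^i_{D_m}(M_m,D_m)$, so it suffices to bound the codimension of each $N_m$ over $D_m$. Over the polynomial ring $\gr^{\Gamma_m}(D_m)=R_m[y_1,\dots,y_n]$ this is the Auslander codimension estimate for the grade of $\Ext^i$ modules, which holds because $D_m$ has finite global dimension and satisfies the Auslander condition; I would cite the local Bernstein-module theory (as in \cite{Bjork}, the polynomial/Weyl-algebra case transported along Theorem \ref{99}).

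The main obstacle I anticipate is the patching step in the grade–dimension equality: while each localization gives the clean relation $\Dim_{R_m}+\Grade_{D_m}=2n$, one must verify that both invariants are computed as suprema over the same set of maximal ideals and that the good filtration on $M$ localizes to a good filtration on $M_m$ compatibly, so that no dimension is lost or gained globally. The purity of the graded dimension of $\gr^\Gamma(D)$, proved in Theorem \ref{GradedRing}, is exactly what guarantees that the local value $2n$ is constant across all relevant $m$, so the supremum is unambiguous; checking that the support of $\gr^\Pi(M)$ meets the fibers uniformly is the one point requiring care, and I would handle it by noting that $\Grade_D(M)$ is the minimum over $m$ of $\Grade_{D_m}(M_m)$ while $\Dim(M)$ is the maximum over $m$ of $\Dim(M_m)$, and these extremes are attained at the same maximal ideals by the uniform local equality.
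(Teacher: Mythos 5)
Your proposal misreads the scope of the statement, and that is a genuine gap rather than a stylistic difference. Proposition \ref{122} is asserted for an \emph{arbitrary} ring of differentiable type $A$, i.e.\ any filtered ring whose associated graded ring is commutative Noetherian regular with pure graded dimension; it is not a statement about the particular ring $D=D(R,F)$ built from Hypothesis \ref{H}. Your entire argument --- Ore localization at maximal ideals $m\subset R$, the base-change isomorphism $\Ext^i_{D_m}(M_m,D_m)\cong R_m\otimes_R\Ext^i_D(M,D)$, the polynomial structure $\gr^{\Gamma_m}(D_m)=R_m[y_1,\ldots,y_n]$ coming from Theorem \ref{99} --- presupposes an underlying commutative coefficient ring satisfying Hypothesis \ref{H}. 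A general ring of differentiable type carries no such $R$, no localization theory at ``maximal ideals,'' and no reason for its graded ring to become a polynomial ring after localizing, so the strategy cannot establish the proposition as stated. This matters for the paper: the result (through the Bernstein-class formalism it underwrites --- minimality of dimension, the $\Ext^n$ anti-equivalence, finite length) is applied not only to $D$ but also to $D(s)$ (Proposition \ref{214} and the discussion around Proposition \ref{221}) and, in Proposition \ref{127}, to a general $A$ and its localization $A_f$; none of these is exhibited as a $D(R',F')$ with $R'$ satisfying Hypothesis \ref{H} over the relevant field, so your special case does not cover them. The paper's own proof is a citation: this is Gabber's theorem \cite{Ga}, stated in exactly this generality as Theorem $1.2.2$ of \cite{MeNa}, whose point is precisely to handle graded rings that are regular of pure graded dimension without being polynomial algebras --- which is the situation for $\gr(D)\cong\Sym(\Der_F(R))$ when $\Der_F(R)$ is projective but not free (Remark \ref{Projective}).

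That said, for the special case $A=D$ your reduction is sound and even illuminating: since Bj\"ork's theorem (Theorem $7.1$, Chapter $2$ of \cite{Bjork}) does not apply to $D$ directly when $\gr(D)$ is not a polynomial ring, but does apply to each $D_m$, localizing is a legitimate way to bypass Gabber's theorem for this one ring. Your self-identified worry about patching also resolves itself: one has $\Dim(M)=\sup_m\Dim_{D_m}(M_m)$ (any chain of primes in $\gr(D)/\Ann(\gr^\Pi M)$ survives in the localization at a maximal ideal $m$ containing the contraction of its top prime) and $\Grade_D(M)=\min_m\Grade_{D_m}(M_m)$, and the uniform local equalities $\Dim_{D_m}(M_m)+\Grade_{D_m}(M_m)=2n$ force the supremum and the minimum to be attained at the same maximal ideals, since $\Dim_{D_m}(M_m)=2n-\Grade_{D_m}(M_m)$. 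So your argument can be completed into a proof that $D$ itself satisfies the conclusion --- but not into a proof of Proposition \ref{122}, which is the statement the paper states, cites, and later uses for rings other than $D$.
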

\begin{proof}
This is a generalized form of Theorem $7.1$ of section $2$ in \cite{Bjork}
given by Gabber \cite{Ga}. The proposition is stated in this form  in 
Mebkhout and Narv\'aez-Macarro's article
as Theorem $1.2.2$  \cite{MeNa}.
\end{proof}

\begin{Def}
Let $A$ be a ring of differentiable type.
Let $M$ be a finitely generated left or right  $A$-module. We say that $M$ is in the left or right \emph{Bernstein class}
if it has minimal dimension, i.e. $\Dim(M)=\Dim(\gr(A))-\wgd(A)$.
\end{Def} 

This class is closed under submodules, quotients and extensions.
The functor 
that sends
$M$ to $\Ext^n_{A}(M,A)$
is an exact contravariant functor that interchanges the left Bernstein
class and the right Bernstein class. Moreover, $M= \Ext^n_{A}( \Ext^n_{A}(M,A),A)$ 
naturally if $M$ is in either the left or the right Bernstein class, so that we have an anti-equivalence
of categories. 
In consequence, the modules in the Bernstein class have finite length as $A$-modules because it is a left and right Noetherian ring \cite[Prop. $1.2.7$]{MeNa}.
\begin{Prop}[Prop. in $1.2.7$ \cite{MeNa}]\label{127}
Let $A$ be a ring of differentiable type and let $f$ be an element in $A_0$. Let $M$ be an $A_f$-module finitely
generated, such that $\Ext^i_{A_f}(M,A_f)=0$ if $i\neq \wgd(A)$. Then, there exists a submodule $M'\subset M$ over $A$ such that $M'$
is finitely generated with minimal dimension and $M'_f=M$.
\end{Prop}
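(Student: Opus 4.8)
The plan is to build $M'$ by dualizing a finitely generated $A$-submodule of $M$ twice and then pushing the result back inside $M$. Put $n=\wgd(A)$ and, for a finitely generated left or right $A$-module $L$, abbreviate $L^\vee=\Ext^n_A(L,A)$, noting that $(-)^\vee$ exchanges left and right modules. First I would pick generators of the finitely generated $A_f$-module $M$, clear denominators, and let $N\subseteq M$ be the finitely generated $A$-submodule they generate; then $N_f=M$. I would then set $M'$ to be the image of the natural $A$-linear composite $N^{\vee\vee}\to (N^{\vee\vee})_f\xrightarrow{\ \sim\ }M$, where the first map is localization and the isomorphism is biduality over $A_f$ (justified below). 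Thus $M'$ is by construction an $A$-submodule of $M$ and a quotient of $N^{\vee\vee}$.

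The decisive observation is that $N^{\vee\vee}$ lies in the Bernstein class no matter what $N$ is. Indeed, for any nonzero finitely generated module $L$, the module $L^\vee=\Ext^n_A(L,A)$ is the top possibly-nonzero $\Ext$ because $\wgd(A)=n$; the codimension inequality in Proposition \ref{122} gives $\Grade_A(L^\vee)\ge n$, while $\Grade_A$ of a nonzero finitely generated module never exceeds $n$, since $\Ext^i_A(-,A)=0$ for $i>n$ and Proposition \ref{122} makes its grade finite, hence attained in some degree $\le n$. Therefore $\Grade_A(L^\vee)=n$, i.e. $L^\vee$ has minimal dimension. Applying this to $L=N^\vee$ shows that $N^{\vee\vee}$ is in the Bernstein class, and since that class is closed under quotients, so is $M'$. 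In particular $M'$ is finitely generated of minimal dimension.

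To see $M'_f=M$ I would use flatness of $A_f$ over $A$ together with the fact that $M$ is in the Bernstein class over $A_f$. If $M\ne 0$, the hypothesis $\Ext^i_{A_f}(M,A_f)=0$ for $i\ne n$ forces $\Ext^n_{A_f}(M,A_f)\ne 0$ and $\wgd(A_f)=n$, so $M$ has grade $n$, lies in the Bernstein class of the ring of differentiable type $A_f$, and satisfies biduality $M\cong \Ext^n_{A_f}(\Ext^n_{A_f}(M,A_f),A_f)$. Flat base change for $\Ext$ of finitely generated modules yields $(N^\vee)_f\cong \Ext^n_{A_f}(M,A_f)$ and then $(N^{\vee\vee})_f\cong \Ext^n_{A_f}(\Ext^n_{A_f}(M,A_f),A_f)\cong M$; this is the isomorphism used above. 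Because the kernel of the localization map $N^{\vee\vee}\to (N^{\vee\vee})_f$ is $f$-power torsion, localizing the surjection $N^{\vee\vee}\to M'$ gives $M'_f\cong (N^{\vee\vee})_f\cong M$, as required. (When $M=0$ one takes $M'=0$.)

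The step I expect to require the most care is not the biduality calculus but checking that the localization $A_f$ is itself a ring of differentiable type with the same weak global dimension $n$, so that Proposition \ref{122} and the Bernstein duality are legitimately available over $A_f$; this is exactly what licenses both the identification $(N^{\vee\vee})_f\cong M$ and the deduction of ``$M$ is in the Bernstein class of $A_f$'' from the hypothesis on $\Ext^i_{A_f}(M,A_f)$. Once $A_f$ is set up in parallel with the localizations $D_m$ treated earlier, the remaining ingredients---the flat base-change isomorphisms for $\Ext$ and the two uses of Proposition \ref{122}---are routine.
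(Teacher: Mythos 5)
The paper itself offers no proof of Proposition \ref{127}; it is quoted verbatim from \cite{MeNa}, so your argument has to stand on its own. Its skeleton is the right one, and most steps check out: choosing a finitely generated $A$-submodule $N\subset M$ with $N_f=M$; the observation (via the codimension inequality of Proposition \ref{122} together with $\Grade_A\leq\wgd(A)$ for nonzero finitely generated modules) that $N^{\vee\vee}=\Ext^n_A(\Ext^n_A(N,A),A)$ is either zero or of minimal dimension; closure of the Bernstein class under quotients; the flat base-change identifications $(N^\vee)_f\cong\Ext^n_{A_f}(M,A_f)$ and $(N^{\vee\vee})_f\cong\Ext^n_{A_f}(\Ext^n_{A_f}(M,A_f),A_f)$; and exactness of localization, which gives $M'_f=M$ once $(N^{\vee\vee})_f\cong M$ is established.

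The genuine gap is precisely the step you flagged and then deferred as routine. Your identification $(N^{\vee\vee})_f\cong M$, and your claim that $M\neq 0$ forces $\Ext^n_{A_f}(M,A_f)\neq 0$, both rest on the assertion that $A_f$ is again a ring of differentiable type with $\wgd(A_f)=n$, so that $M$ lies in its Bernstein class and Bernstein biduality applies over $A_f$. This is not ``parallel to the localizations $D_m$'': the paper's analysis of $D_m$ uses Hypothesis \ref{H} and Theorem \ref{99} at a maximal ideal of $R$ to produce explicit coordinates and derivations, and none of that structure exists for an abstract ring of differentiable type $A$ and an arbitrary $f\in A_0$. What does survive localization is that $\gr(A_f)\cong\gr(A)_f$ is commutative, Noetherian and regular; but the \emph{pure graded dimension} requirement and the value of $\wgd$ need not survive. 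Already for $A=\ZZ_{(p)}$ with the trivial filtration and $f=p$ the graded dimension drops from $1$ to $0$, and since $A_0$ need not be Jacobson, the heights of maximal graded ideals of $\gr(A)_f$ can a priori vary, so purity itself is in doubt; your inference ``$M\neq 0$ implies $\Ext^n_{A_f}(M,A_f)\neq 0$'' also presupposes the Gabber theory over $A_f$, which is exactly what is in question. The gap can be closed, but by a mechanism that bypasses $A_f$ entirely: since $A$ is left and right Noetherian of finite global dimension, every finitely generated $N$ carries a convergent biduality spectral sequence with $E_2^{p,q}=\Ext^p_A(\Ext^q_A(N,A),A)$, converging to $N$ in total degree $0$ and to $0$ otherwise; localizing at $f$ is exact and, by flat base change, turns the $E_2$-terms into $\Ext^p_{A_f}(\Ext^q_{A_f}(M,A_f),A_f)$ and the abutment into $M$. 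Your hypothesis kills every row $q\neq n$, so the localized sequence degenerates and yields simultaneously $M\cong\Ext^n_{A_f}(\Ext^n_{A_f}(M,A_f),A_f)=(N^{\vee\vee})_f$ and $M=0$ whenever $\Ext^n_{A_f}(M,A_f)=0$. With that substitution for your appeal to the Bernstein class of $A_f$, the rest of your proof goes through as written.
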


Through the rest of this section, $F(s)$ denotes the fraction field of the polynomial ring $F[s]$ over the field $F,$ and $D(s)$ denotes the ring $F(s)\otimes_F D$ with the filtration given by $F(s)\otimes_F\Gamma^i$. 
By $R(s)$, we mean the $F(s)$-algebra $F(s)\otimes_F R$.
Similarly, $D[s]$ denotes $F[s]\otimes_F D$ and $R[s]$ denotes $F[s]\otimes_F R$.

\begin{Prop}\label{211}
$R(s)$ is an $F(s)$-algebra equidimensional of dimension $\Dim(R)$.
\end{Prop}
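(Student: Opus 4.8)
The plan is to reduce the equidimensionality of $R(s)=F(s)\otimes_F R$ to that of $R$, which is assumed in Hypothesis~\ref{H}~$(1)$. The key technical point is that passing from $R$ to $R(s)$ is a flat base change along the field extension $F\to F(s)$: since $F(s)$ is the fraction field of the polynomial ring $F[s]$, the map $F\to F(s)$ is flat (indeed $F(s)$ is a localization of a polynomial ring over $F$), and hence $R\to R(s)$ is faithfully flat. First I would record that $R(s)$ is again a Noetherian ring; this follows because $R(s)$ is a localization of the polynomial ring $R[s]=R\otimes_F F[s]$ over the Noetherian ring $R$, and such localizations of finitely generated algebras over Noetherian rings are Noetherian by the Hilbert basis theorem.

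Next I would control the dimension. For a fibre of a flat morphism the standard dimension formula applies: for a prime $\mathfrak{q}\subset R(s)$ lying over $\mathfrak{p}\subset R$, one has
$$\dim R(s)_{\mathfrak q}=\dim R_{\mathfrak p}+\dim\bigl(R(s)_{\mathfrak q}/\mathfrak p R(s)_{\mathfrak q}\bigr).$$
The fibre ring is a localization of $\kappa(\mathfrak p)\otimes_F F(s)=\kappa(\mathfrak p)(s)$, the rational function field in one variable over the residue field, which is a field; hence the fibre contribution is $0$ and $\dim R(s)_{\mathfrak q}=\dim R_{\mathfrak p}$. Because $F\to F(s)$ is a field extension, the base change preserves the behaviour of the maximal and minimal primes, so every minimal prime of $R(s)$ contracts to a minimal prime of $R$, and conversely. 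Equidimensionality of $R$ then transfers: for every minimal prime of $R(s)$ the corresponding quotient has dimension equal to $\dim R=n$, which is exactly the statement that $R(s)$ is equidimensional of dimension $\dim(R)$.

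I expect the main obstacle to be the careful verification of equidimensionality as opposed to just equality of total dimension. It is not enough to check that $\dim R(s)=\dim R$; one must confirm that \emph{every} maximal chain, equivalently every minimal prime quotient, has the common dimension $n$. The cleanest route is to show that contraction sets up a bijection between minimal primes of $R(s)$ and minimal primes of $R$ that preserves the dimension of the corresponding quotient rings. This uses that $\mathrm{Spec}\,R(s)\to\mathrm{Spec}\,R$ is surjective with irreducible, zero-dimensional fibres $\kappa(\mathfrak p)(s)$, so irreducible components upstairs map onto irreducible components downstairs; combined with the fibre-dimension computation above, each component of $R(s)$ inherits the dimension $n$ of its image. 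One should take care that no component of $R(s)$ collapses or splits, which is guaranteed here because the generic fibre is a single point of residue field $\kappa(\mathfrak p)(s)$. Granting this correspondence, the equidimensionality hypothesis on $R$ passes directly to $R(s)$, completing the proof.
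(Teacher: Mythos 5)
Your argument breaks down at its central step: the claim that the fibre rings of $\Spec R(s)\to\Spec R$ are fields. The fibre over a prime $\mathfrak{p}\subset R$ is $\kappa(\mathfrak{p})\otimes_F F(s)=(F[s]\setminus\{0\})^{-1}\kappa(\mathfrak{p})[s]$, and this is a field only when $\kappa(\mathfrak{p})$ is algebraic over $F$ --- which Hypothesis \ref{H}(2) guarantees only at \emph{maximal} ideals. At any non-maximal prime, $\kappa(\mathfrak{p})$ contains an element $t$ transcendental over $F$, and then the prime $(s-t)\subset\kappa(\mathfrak{p})[s]$ divides no nonzero polynomial of $F[s]$, hence survives the localization: the fibre is a one-dimensional domain, not a field. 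Concretely, for $R=\CC[x]$, $F=\CC$, one has $R(s)=\CC(s)[x]$, and the maximal ideal $(sx-1)$ has height $1$ while contracting to $(0)\subset\CC[x]$; so your identity $\Dim R(s)_{\mathfrak{q}}=\Dim R_{\mathfrak{p}}$ fails, and so does your assertion that the base change ``preserves the behaviour of the maximal primes'' (maximal ideals of $R(s)$ need not lie over maximal ideals of $R$). Since equidimensionality here is purity of the heights of the maximal ideals of $R(s)$, it cannot be read off from the minimal-prime correspondence (that part of your argument is fine --- the fibres are domains, so minimal primes do correspond) combined with a fibre contribution of zero.

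The gap is not repairable along your lines, because your proof uses only flatness and Hypothesis \ref{H}(1), and the statement is false under those hypotheses alone. Take $F=\CC$ and $R=\CC(t)[x]_{(x)}$: this is a regular local (hence equidimensional) ring of dimension $1$, but its residue field $\CC(t)$ is not algebraic over $\CC$, and $R(s)=\CC(s)\otimes_\CC R$ contains the prime generated by $x$ and $ts-1$, which has height $2$; thus $\Dim R(s)=2\neq\Dim R$. Any correct proof must therefore invoke (2) to control the maximal ideals $\mathfrak{Q}\subset R(s)$ lying over non-maximal primes of $R$: writing $\mathfrak{p}=\mathfrak{Q}\cap R$, one has $\Ht\,\mathfrak{Q}=\Ht\,\mathfrak{p}+\epsilon$ with $\epsilon\in\{0,1\}$ the fibre contribution, and the real content is that maximality of $\mathfrak{Q}$ forces either $\mathfrak{p}$ maximal in $R$ (whence $\epsilon=0$ by (2) and $\Ht\,\mathfrak{Q}=n$) or $\Dim R/\mathfrak{p}=1$ with $\epsilon=1$ (whence $\Ht\,\mathfrak{Q}=(n-1)+1=n$, using that $R$ is regular, hence catenary, and equidimensional). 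This is precisely what Theorem 2.1.1 of Mebkhout--Narv\'aez-Macarro establishes, and the paper's own proof is nothing more than a citation of that theorem; their argument is where the residue-field hypothesis does its work.
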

\begin{proof}
This is an immediate consequence of Theorem $2.1.1$ in \cite{MeNa}.
\end{proof}

\begin{Prop}\label{214}
$D(s)$ is a ring of differentiable type with the filtration $F(s)\otimes_F\Gamma$ such that 
$\gr^{F(s)\otimes_F\Gamma} (D(s))$ is a ring of pure graded dimension $2\dim(R)$.
\end{Prop}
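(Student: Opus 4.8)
The plan is to reduce everything to the structure of $\gr^\Gamma(D)$ already obtained in Theorem \ref{GradedRing}, exploiting that $F(s)$ is flat over $F$. First I would observe that $F(s)$, being a localization of the free $F$-module $F[s]$, is flat over $F$, so that $-\otimes_F F(s)$ is exact. Applying this functor to the short exact sequences $0\to\Gamma^{j-1}\to\Gamma^j\to\Gamma^j/\Gamma^{j-1}\to 0$ shows that forming the associated graded ring commutes with the base change, yielding
$$\gr^{F(s)\otimes_F\Gamma}(D(s)) = F(s)\otimes_F \gr^\Gamma(D).$$
This identity is the backbone of the argument.

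Next I would combine it with Remark \ref{GradedSym}, which gives $\gr^\Gamma(D)\cong\Sym_R(\Der_F(R))$. Since the symmetric algebra commutes with base change along $R\to R(s)=F(s)\otimes_F R$, and since $F(s)\otimes_F\Sym_R(\Der_F(R)) = R(s)\otimes_R\Sym_R(\Der_F(R))$, I obtain
$$\gr^{F(s)\otimes_F\Gamma}(D(s)) \cong \Sym_{R(s)}\bigl(R(s)\otimes_R\Der_F(R)\bigr),$$
the symmetric algebra of a projective $R(s)$-module of rank $n$. With this presentation the ring-theoretic properties follow as in Theorem \ref{GradedRing}. Commutativity and the unit are inherited from $\gr^\Gamma(D)$ and preserved by the tensor product. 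For the Noetherian property I would base change the surjection $R[z_1,\ldots,z_m]\to\gr^\Gamma(D)$ of Theorem \ref{GradedRing} to a surjection $R(s)[z_1,\ldots,z_m]\to\gr^{F(s)\otimes_F\Gamma}(D(s))$, which is Noetherian because $R(s)$ is Noetherian (as $F(s)/F$ is a finitely generated field extension). For regularity I would use that $R$ is regular over the perfect field $F$ and that $F(s)/F$ is separable, so the base change $R(s)$ remains regular; the symmetric algebra of a projective module over a regular ring is regular, being locally a polynomial extension.

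Finally, for the pure graded dimension I would repeat the homogeneous localization argument of Theorem \ref{GradedRing}. A maximal homogeneous ideal $\eta$ of $\gr^{F(s)\otimes_F\Gamma}(D(s))$ contracts to a maximal ideal $m'$ of $R(s)$, and localizing presents $\gr^{F(s)\otimes_F\Gamma}(D(s))_\eta$ as a localization of the polynomial ring $R(s)_{m'}[y_1,\ldots,y_n]$ at a maximal homogeneous ideal, whose dimension is $\dim R(s)_{m'}+n$. By Proposition \ref{211}, $R(s)$ is equidimensional of dimension $n$, so this equals $2n=2\dim(R)$, and the graded dimension is pure.

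The step I expect to be the main obstacle is the regularity (and Noetherianity) of the base-changed ring $R(s)$, which is not of finite type over $F$. This is exactly why I would lean on the facts that $F$ is perfect, securing that regularity is preserved under the separable extension $F(s)/F$, and that $F(s)/F$ is finitely generated, securing Noetherianity; alternatively, these properties of $R(s)$ may be quoted from Theorem $2.1.1$ of \cite{MeNa} through Proposition \ref{211}.
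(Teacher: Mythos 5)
Your proposal is correct and follows essentially the same route as the paper: both rest on the flat base-change identity $\gr^{F(s)\otimes_F\Gamma}(D(s))=F(s)\otimes_F\gr^\Gamma(D)$, deduce commutativity, Noetherianity and regularity from the corresponding properties of $\gr^\Gamma(D)$, and obtain pure graded dimension $2n$ by localizing at a maximal homogeneous ideal so that the ring becomes a polynomial ring in $n$ variables over an equidimensional coefficient ring of dimension $n$. The only (harmless) divergence is bookkeeping: you contract $\eta$ to a maximal ideal of $R(s)$ and invoke Proposition \ref{211}, whereas the paper contracts $\eta$ to $R$, localizes at a maximal ideal $m\subset R$, and cites Theorem $2.1.4$ of \cite{MeNa} for the equidimensionality of $F(s)\otimes_F R_m$.
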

\begin{proof}
Since $D$ is a ring of differentiable type, 
$$\gr^{F(s)\otimes_F\Gamma} (D(s))=F(s)\otimes_{F[s]} F[s]\otimes_F\gr^\Gamma(D)=F(s)\otimes_{F[s]}\otimes_F\gr^\Gamma(D)[s]$$
is commutative, Noetherian and regular.
For the sake of simplicity, we will omit the filtration.
We claim that $\gr (D(s))$ has pure graded dimension $2\Dim(R)=2n$.
Let $\eta\subset \gr(D(s))$ be a maximal homogeneous ideal and $P=\eta\cap R$. 
Let $m\subset R$ be a maximal ideal containing $P$. We have that the ideal $\eta_m$, induced by $\eta$, 
is a maximal homogeneous ideal of 
$$(R\setminus m)^{-1} \gr(D(s))=F(s)\otimes_F \gr(D_m)=(F(s)\otimes_F R_m)[y_1,\ldots,y_n],$$
the polynomial ring with coefficients on $F(s)\otimes_F R_m$ and variables $y_1,\ldots,y_n$.
Then, $\Ht (\eta)=\Ht (\eta_m)=2n$  because $F(s)\otimes_F R_m$ is equidimensional of dimension $n$ by Theorem $2.1.4$ in \cite{MeNa}.
\end{proof}

Let $M$ be a left $D(s)$-module in the Bernstein class of $D(s)$. Let $N$ be a $D$-module in the Bernstein class of $D$
such that $F(s)\otimes_F N=M$. For every $\ell\in F$, the $D$-module $M_{\ell} :=N/(s-\ell)N$ is the Bernstein class of $D$.
\begin{Prop} \label{221}
With the same notaton as above, we have that
$$
\Dim_{D(s)}(M)\geq \Dim_D (N_\ell),
$$
for all but finitely many $\ell\in F$.
\end{Prop}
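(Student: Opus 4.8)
The plan is to compute both dimensions through associated graded modules and to deduce the inequality from the upper semicontinuity of fibre dimension for a coherent module over the affine line $\Spec F[s]$. I regard $N$ as a finitely generated $D[s]$-module carrying a good filtration, with $M\cong F(s)\otimes_{F[s]}N$ its localization, and I set $G=\gr(N)$, a finitely generated module over $\gr(D[s])=\gr(D)[s]$, in which the variable $s$ sits in filtration degree zero. Localizing the good filtration at $F[s]\setminus\{0\}$ produces a good filtration on $M$ over $D(s)$ with $\gr(M)=F(s)\otimes_{F[s]}G$, so that $\Dim_{D(s)}(M)$ equals the dimension of $G\otimes_{F[s]}F(s)$ as a module over $\gr(D(s))=F(s)\otimes_F\gr(D)$.

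First I would handle the specialization. Because $s-\ell$ is homogeneous of filtration degree zero, it acts on $G$ preserving the grading, and the quotient filtration that $N_\ell=N/(s-\ell)N$ inherits from $N$ is again a good filtration over $D$. A direct comparison of graded pieces then yields a natural surjection of $\gr(D)$-modules from $G/(s-\ell)G$ onto $\gr(N_\ell)$, and hence $\Dim_D(N_\ell)\le\Dim\bigl(G/(s-\ell)G\bigr)$.

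Next I would pass to geometry. Writing $Z=\Supp_{\gr(D)[s]}(G)\subseteq\Spec(\gr(D)[s])$ and letting $p\colon\Spec(\gr(D)[s])\to\Spec F[s]=\AA^1$ be the projection, localization commutes with support, so the generic fibre of $Z$ has dimension $\Dim_{D(s)}(M)$, while the fibre of $Z$ over the closed point $\ell$ equals $\Supp\bigl(G/(s-\ell)G\bigr)$ and thus has dimension $\Dim\bigl(G/(s-\ell)G\bigr)\ge\Dim_D(N_\ell)$ by the previous step. Since fibre dimension is upper semicontinuous and the base $\AA^1$ is a curve, the locus of closed points where the fibre dimension strictly exceeds the generic value is a proper closed subset, hence finite. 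Therefore the fibre dimension at $\ell$ equals the generic fibre dimension for all but finitely many $\ell\in F$, which gives $\Dim_D(N_\ell)\le\Dim_{D(s)}(M)$ outside a finite set, as claimed.

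The main obstacle I anticipate is the specialization step rather than the semicontinuity: one must verify that passing to $N/(s-\ell)N$ is compatible with the graded structure, which rests on $s-\ell$ having filtration degree zero and on the quotient filtration being good. This is precisely what produces the surjection $G/(s-\ell)G\to\gr(N_\ell)$, and hence only the inequality $\Dim_D(N_\ell)\le\Dim(G/(s-\ell)G)$ rather than an equality; the remaining dimension-of-fibres argument over the curve $\AA^1$ is then standard.
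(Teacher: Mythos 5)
Your first three steps are correct and reproduce the skeleton of the intended argument: putting a good filtration on $N$ over $D[s]$ with $s$ in filtration degree zero, identifying $\gr(M)\cong F(s)\otimes_{F[s]}\gr(N)$ for the localized (still good) filtration, and obtaining the surjection $\gr(N)/(s-\ell)\gr(N)\to\gr(N_\ell)$ from the quotient filtration, so that everything reduces to comparing the generic fibre of $\Supp(\gr N)\to\Spec F[s]$ with its closed fibres.

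The gap is in your final step, which is where the real content of the proposition lies. Upper semicontinuity and constructibility of fibre dimension (Chevalley's theorems) are statements about morphisms locally of finite type, and the projection $Z=\Supp_{\gr(D)[s]}(\gr N)\to \Spec F[s]$ is \emph{not} of finite type in this paper's setting: $\gr(D)\cong\Sym_R(\Der_F(R))$ is of finite type over $R$, but $R$ itself is not a finitely generated $F$-algebra. Hypothesis \ref{H} is designed precisely to include rings such as the one in the Main Example, a quotient of $V[[x_1,\ldots,x_{n+1}]]\otimes_V F$. So the theorem you quote does not apply, and the dichotomy ``the locus where the fibre dimension exceeds the generic value is finite'' is not a formal consequence of Noetherianity: for a flat family over a curve whose total space is merely Noetherian, the dimension of closed fibres is controlled by catenarity and equidimensionality properties which can fail for general Noetherian rings (this is the classical failure of the dimension formula outside the universally catenary setting).

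A symptom of the problem is that your argument uses none of the paper's standing hypotheses: not the regularity and pure graded dimension $2n$ of $\gr(D)$ (Theorem \ref{GradedRing}), not the fact that the supports in question are defined by homogeneous ideals, and not the fact that $\gr(D(s))$ again has pure graded dimension $2n$ (Proposition \ref{214}, which rests on the equidimensionality statement of Proposition \ref{211}, i.e.\ on Section 2.1 of \cite{MeNa}, where the hypothesis that residue fields of maximal ideals are algebraic over $F$ serves as the substitute for the Nullstellensatz). These are exactly the inputs that legitimize the fibre-dimension comparison in this non-finite-type setting, and they are how the proof that the paper points to (Theorem 2.2.1 of \cite{MeNa}) actually proceeds. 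As written, your step 4 is an assertion rather than a proof; it would be a complete and pleasant geometric argument in the special case where $R$ is of finite type over $F$ (e.g.\ regular affine $\CC$-algebras), but not in the generality of Hypothesis \ref{H}, which is the case the paper needs for its mixed-characteristic application.
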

\begin{proof}
This is analogous to  the proof of Theorem $2.2.1$ in \cite{MeNa}.
\end{proof}

\begin{Prop}\label{223}
$\wgd(D(s))=\dim(R)=n$.
\end{Prop}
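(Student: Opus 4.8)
The plan is to compute $\wgd(D(s))$ by combining the general dimension formula of Proposition \ref{122} with the graded dimension count already established for $D(s)$ in Proposition \ref{214}. Recall that for a ring of differentiable type $A$, one has $\wgd(A)=\Dim(\gr(A))-\Dim(M)$ minimized over finitely generated modules $M$, so that $\wgd(A)$ is the difference between $\Dim(\gr(A))$ and the minimal possible dimension of a nonzero finitely generated $A$-module. Since Proposition \ref{214} tells us that $\gr(D(s))$ has pure graded dimension $2n$, the computation reduces to exhibiting the minimal module dimension, which I expect to be $n$, so that $\wgd(D(s))=2n-n=n=\dim(R)$.

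First I would record, via Proposition \ref{214}, that $D(s)$ is a ring of differentiable type with $\Dim(\gr(D(s)))=2n$. Then, exactly as in the proof of Proposition \ref{DimDif}, I would control the $\Ext$ groups by localizing at maximal ideals of $R$. The key point is that for every maximal ideal $m\subset R$, the localization $D(s)_m=(F(s)\otimes_F R_m)[\partial_1,\ldots,\partial_n]$ is (after base change to $F(s)$) a Weyl-type algebra over the regular local ring $R_m$, for which Bj\"ork's theory (as cited after Remark \ref{GradedSym} and in \cite{Bj2}) gives $\wgd(D(s)_m)=n$. I would then argue that $\Ext^j_{D(s)}(M,D(s))=0$ for all finitely generated $M$ and all $j>n$, using the localization isomorphism $R_m\otimes_R\Ext^j_{D(s)}(M,D(s))\cong\Ext^j_{D(s)_m}(M_m,D(s)_m)$ together with the fact that a module vanishes iff all its localizations at maximal ideals of $R$ vanish; this yields $\wgd(D(s))\le n$.

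For the reverse inequality I would produce a single module attaining grade $n$: taking $M=R(s)$, the same localization isomorphism gives $R_m\otimes_R\Ext^n_{D(s)}(R(s),D(s))\cong\Ext^n_{D(s)_m}(R_m(s),D(s)_m)$, and the latter is nonzero because $R_m(s)$ is the canonical cyclic module whose top $\Ext$ is nonvanishing over the Weyl-type algebra. Hence $\Ext^n_{D(s)}(R(s),D(s))\neq 0$, giving $\wgd(D(s))\ge n$ and therefore $\wgd(D(s))=n=\dim(R)$. This matches the formula $\wgd(A)=\Dim(\gr(A))-(\text{minimal dimension})=2n-n$, which cross-checks the answer through Proposition \ref{122}.

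The main obstacle I anticipate is not the inequalities themselves but justifying the localization step cleanly in the presence of the extra variable $s$: one must verify that $F(s)\otimes_F-$ commutes with the formation of $\Ext_{D}$ and with localization at $R\setminus m$ in the right order, so that $\Ext^j_{D(s)_m}(M_m,D(s)_m)$ really is computed by an honest polynomial (Weyl-type) ring over $F(s)\otimes_F R_m$. Since $F(s)$ is flat over $F$ and $F[s]$, and since the earlier discussion already establishes $\gr(D(s)_m)=(F(s)\otimes_F R_m)[y_1,\ldots,y_n]$ together with $\wgd(D(s)_m)=n$, these compatibilities should follow formally; the proof would simply record that the argument of Proposition \ref{DimDif} applies verbatim with $D$ replaced by $D(s)$ and $R$ by $R(s)$.
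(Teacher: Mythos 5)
Your globalization strategy --- localize at maximal ideals of $R$, use the isomorphism $R_m\otimes_R\Ext^j_{D(s)}(M,D(s))\cong\Ext^j_{D(s)_m}(M_m,D(s)_m)$, and detect vanishing of these $R$-modules locally --- is sound and mirrors Proposition \ref{DimDif}; the flatness compatibilities you worry about in your last paragraph are indeed formal and are not the problem. The genuine gap is the local base case: you assert that Bj\"ork's theory \cite{Bj2} gives $\wgd(D(s)_m)=n$ because $D(s)_m=(F(s)\otimes_F R_m)[\partial_1,\ldots,\partial_n]$ is a ``Weyl-type algebra (after base change to $F(s)$).'' Bj\"ork's theorem applies to differential operators over a regular \emph{local} ring whose residue field is algebraic over the base field (the setting of Theorem \ref{99}), and the base ring $F(s)\otimes_F R_m$ of $D(s)_m$ satisfies neither condition: it is not local, and its maximal ideals can have residue fields transcendental over $F(s)$. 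Concretely, for $R_m=F[[x]]$ (a legitimate instance of Hypothesis \ref{H}), the ideal generated by $s-e^x$ is maximal in $F(s)\otimes_F F[[x]]$, since inverting $e^x-1=x(1+x/2+\cdots)$ inverts $x$, and its residue field is $F((x))$, which is not algebraic over $F(s)=F(e^x)$ because $x$ and $e^x$ are algebraically independent over $F$. Nor does the base change $F\to F(s)$ formally preserve weak global dimension: writing $D(s)_m$ as an Ore localization of the polynomial extension $D_m[s]$ only yields $n\le\wgd(D(s)_m)\le\wgd(D_m)+1=n+1$, and ruling out the value $n+1$ --- equivalently, proving the Bernstein inequality $\Dim(M)\ge n$ for finitely generated $D(s)_m$-modules --- is precisely the hard content of the proposition. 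Your parenthetical ``after base change to $F(s)$'' conceals exactly this difficulty.

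This is why the paper instead adapts the proof of Theorem $2.2.3$ of \cite{MeNa}, whose mechanism is the specialization statement recorded here as Proposition \ref{221}: a finitely generated $D(s)$-module of dimension $<n$ would specialize, for all but finitely many $\ell\in F$, to a nonzero $D$-module of dimension $<n$, contradicting the Bernstein inequality for $D$, which \emph{does} follow from Bj\"ork after localizing (Proposition \ref{DimDif}); the lower bound is then the easy half, e.g. $\Ext^n_{D(s)}(R(s),D(s))\cong F(s)\otimes_F\Ext^n_D(R,D)\neq 0$ by flatness. Your argument can be repaired with a one-line change: replace the appeal to Bj\"ork by an appeal to Theorem $2.2.3$ of \cite{MeNa} applied to $R_m$, which is legitimate because $R_m$ satisfies the hypotheses $(1.1.2)$ of \cite{MeNa} by Theorem \ref{99}; with that citation your localization argument correctly globalizes the local statement, and is then a reasonable alternative to reproducing the specialization argument for $D(s)$ itself. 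As written, however, the step ``$\wgd(D(s)_m)=n$ by Bj\"ork'' assumes a statement that is essentially equivalent to the one being proved.
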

\begin{proof}
This is analogous to the proof of Theorem $2.2.3$ in \cite{MeNa}.
\end{proof}

Let $N[s]$ be the  free $R_f[s]$-module generated by a symbol ${\bf f^s}$ and let $N(s)=F(s)\otimes_F N[s]$. We give to $N[s]$ (resp. $N(s)$) a
structure of a left $D_f[s]$-module (resp. $D_f(s)$-module) as follows,
$$
\partial g{\bf f^s}=(\partial g+sgf^{-1}){\bf f^s}
$$
for every $\partial\in\Der_{F} (R)$ and every $g\in R_f[s]$ (resp. $g\in R_f(s)$ ). If M is a left $D$-module, we define
$
M_f[s]{\bf f^s}:=M_f[s]\otimes_{R_f[s]} N[s]=M\otimes_R N[s] $ and $ M_f(s){\bf f^s}:= N(s) \otimes_{R_f[s]} M_f(s)=M\otimes_R N(s).
$
This is a left $D_f[s]$-module ($D_f(s)$-module), and clearly, $M_f[s]\bf{f^s}$ ($M_f(s){\bf f^s}$) is
a finitely generated $D_f[s]$-module ($D_f(s)$-module) if $M$ is.
\begin{Prop}\label{311}
Let $M$ be a  left $D$-module in the Bernstein class and let $u\in M$.
Then, there exists a nonzero polynomial $b(s)\in F[s]$ and an operator $P(s)\in D[s]$ that satisfies the equation
$$
b(s)(u\otimes {\bf f^s})=P(s)f(u\otimes f^s)
$$in $M[s]{\bf f^s}$.
\end{Prop}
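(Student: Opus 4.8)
Proposition \ref{311} is the existence of the Bernstein–Sato functional equation for an element $u$ in a Bernstein-class $D$-module. This is a classical type of statement, and the standard strategy is to pass to the parameter $s$, produce the equation over $D(s)$ using finiteness, and then clear denominators to descend to $F[s]$.

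=== PROOF PROPOSAL (LaTeX) ===

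\begin{proof}[Proof sketch]
The plan is to first establish the functional equation over the ring $D(s)$, where dimension-counting is available, and then descend to $D[s]$ by clearing denominators in $s$.

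First I would pass to the generic module $M_f(s)\mathbf{f^s}=M\otimes_R N(s)$ and consider the $D(s)$-submodule
$$
M(s) := D(s)\cdot(u\otimes \mathbf{f^s}) \subset M_f(s)\mathbf{f^s}
$$
generated by the single element $u\otimes\mathbf{f^s}$. Since $M$ is finitely generated and in the Bernstein class of $D$, I expect $M(s)$ to be a finitely generated $D(s)$-module, and the key step is to show it lies in the Bernstein class of $D(s)$: by Proposition \ref{214} the ring $D(s)$ is again of differentiable type with $\gr$ of pure graded dimension $2n$, and by Proposition \ref{223} we have $\wgd(D(s))=n$, so the minimal dimension for a nonzero $D(s)$-module is $2n-n=n$. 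I would bound $\Dim_{D(s)}(M(s))$ from above by comparing, via Proposition \ref{221}, the dimension of the generic module to the dimensions of the specializations $N_\ell$, which live in the Bernstein class of $D$ and therefore have dimension exactly $n$; this forces $\Dim_{D(s)}(M(s))\le n$, and hence equality, placing $M(s)$ in the Bernstein class.

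Once $M(s)$ is a Bernstein-class module, it has finite length as a $D(s)$-module (as recorded after Proposition \ref{127}). The crucial observation is that multiplication by $f$ gives a $D(s)$-linear shift operator that lowers the exponent of $\mathbf{f^s}$; concretely, twisting the action of $s$ realizes $f\cdot(u\otimes\mathbf{f^s})$ inside the same generic module, and the descending chain of submodules $D(s)\cdot f^k(u\otimes\mathbf{f^s})$ must stabilize by finite length. Stabilization yields $N\in F(s)\otimes_F D$ with
$$
u\otimes\mathbf{f^s}=N\,f\,(u\otimes\mathbf{f^s})
$$
in $M_f(s)\mathbf{f^s}$. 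Here $N$ has coefficients in the field $F(s)$, so writing $N=b(s)^{-1}P(s)$ with $b(s)\in F[s]$ a common denominator and $P(s)\in D[s]$ clears denominators and produces the asserted equation $b(s)(u\otimes\mathbf{f^s})=P(s)f(u\otimes f^s)$ in $M[s]\mathbf{f^s}$, since both sides now have polynomial (rather than rational) dependence on $s$.

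The main obstacle I anticipate is the second paragraph: verifying that $M(s)$ genuinely lands in the Bernstein class, i.e. that the dimension does not jump upon adjoining the generic parameter $s$ and twisting the derivation action by $sgf^{-1}$. This requires the specialization inequality of Proposition \ref{221} together with the fact that each $N_\ell=N/(s-\ell)N$ stays in the Bernstein class of $D$, and it is exactly the step where the good-filtration and equidimensionality hypotheses (Hypothesis \ref{H}, via Propositions \ref{211} and \ref{214}) are doing the real work. The remaining finite-length and denominator-clearing arguments are then formal.
\end{proof}
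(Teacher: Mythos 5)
Your overall skeleton --- pass to the generic module over $D(s)$, obtain finite length, stabilize the descending chain $D(s)\cdot f^k(u\otimes{\bf f^s})$, shift $s\mapsto s-k$, and clear denominators --- is the right one; it is the classical Bernstein argument and the shape of the proof of Theorem $3.1.1$ in \cite{MeNa}, which is all the paper itself invokes. But your key step, placing $D(s)\cdot(u\otimes{\bf f^s})$ in the Bernstein class of $D(s)$, is wrongly justified, and the error is not cosmetic. Proposition \ref{221} states $\Dim_{D(s)}(M)\geq \Dim_D(N_\ell)$ for all but finitely many $\ell$: the generic dimension bounds the special dimensions from above. So from ``the specializations have dimension $n$'' it yields only $\Dim_{D(s)}(M(s))\geq n$, which is Bernstein's inequality and is already free from Proposition \ref{122}; it cannot give the upper bound $\Dim_{D(s)}(M(s))\leq n$ that you need. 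Worse, the setup of Proposition \ref{221} assumes from the start that the generic module is in the Bernstein class of $D(s)$, so invoking it to prove exactly that is circular; and your assertion that the specializations $N_\ell$ lie in the Bernstein class of $D$ is itself unsupported --- for integer $\ell$ these are related to submodules of $M_f$, and knowing that $M_f$ is in the Bernstein class is Corollary \ref{PropLBC}, which in the paper is a consequence of Proposition \ref{311} via Corollary \ref{312}, another circularity.

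The ingredient you are missing is Proposition \ref{127}, which is stated in the paper precisely for this purpose and which you never use. The argument of \cite{MeNa} runs as follows: (i) $F(s)\otimes_F M$ is in the Bernstein class of $D(s)$ --- this is where Propositions \ref{211} and \ref{214} (equidimensionality and base change of good filtrations) do the real work, not Proposition \ref{221}; (ii) by Proposition \ref{122} together with $\wgd(D(s))=n$ (Proposition \ref{223}), one gets $\Ext^i_{D(s)}(F(s)\otimes_F M,D(s))=0$ for $i\neq n$, and by flatness of localization the same vanishing holds for $M_f(s)$ over $D_f(s)$; (iii) conjugation by ${\bf f^s}$, i.e.\ $\partial\mapsto \partial+sf^{-1}\partial(f)$, is an automorphism of $D_f(s)$ identifying $M_f(s){\bf f^s}$ with a twist of $M_f(s)$, so the $\Ext$-vanishing transfers to $M_f(s){\bf f^s}$; (iv) Proposition \ref{127}, applied to the ring of differentiable type $D(s)$ and the element $f$, then produces a finitely generated $D(s)$-submodule $\mathcal{N}'\subset M_f(s){\bf f^s}$ of minimal dimension, hence of finite length, with $\mathcal{N}'_f=M_f(s){\bf f^s}$; (v) some $f^k(u\otimes{\bf f^s})$ lies in $\mathcal{N}'$, and your chain argument, run inside $\mathcal{N}'$, gives $f^k(u\otimes{\bf f^s})=Q(s)f^{k+1}(u\otimes{\bf f^s})$, after which the substitution $s\mapsto s-k$ and clearing of denominators finish exactly as in your last paragraph. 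In short, your first and last paragraphs are fine as a sketch, but the middle paragraph --- the only place where finiteness is actually created --- must be replaced by the route through Propositions \ref{122}, \ref{223} and \ref{127}.
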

\begin{proof}
This is analogous to $3.1.1$ in \cite{MeNa}.
\end{proof}
\begin{Cor}\label{312}
If $M$ is a left $D$-module in the Bernstein class, the $M_f$ is a finitely generated $D$-module.
\end{Cor}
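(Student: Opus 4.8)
The plan is to use the Bernstein--Sato functional equation of Proposition \ref{311} to bound the denominators appearing in $M_f$, reducing finite generation to a specialization argument. First I would fix a finite generating set $u_1,\dots,u_r$ of $M$ as a $D$-module and apply Proposition \ref{311} to each $u_i$, obtaining nonzero $b_i(s)\in F[s]$ and $P_i(s)\in D[s]$ with $b_i(s)(u_i\otimes {\bf f^s})=P_i(s)f(u_i\otimes {\bf f^s})$. Setting $b(s)=\prod_i b_i(s)$ and absorbing the remaining factors (which are central, lying in $F[s]$) into the operators, I obtain a single nonzero $b(s)\in F[s]$ and operators $\widetilde P_i(s)\in D[s]$ satisfying the same equation for every generator. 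The goal is then to prove that the finite family $\{u_i/f^{k}:1\le i\le r,\ 0\le k\le N\}$ generates $M_f$ over $D$ for a suitable integer $N$.

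The key device is a specialization map. For each integer $k\ge 0$ I would define $\mathrm{sp}_{-k}\colon M[s]\,{\bf f^s}=M\otimes_R N[s]\to M_f$ by $u\otimes g(s){\bf f^s}\mapsto g(-k)\,u/f^{k}$ for $g\in R_f[s]$; this is $R$-balanced, hence well defined, and I would check that it is $D$-linear. The verification amounts to the assertion that differentiating $g\,{\bf f^s}$ and then setting $s=-k$ agrees with differentiating $g/f^{k}$ inside $M_f$, which is immediate from $\partial(f^{-k})=-k\,\partial(f)\,f^{-k-1}$ by a direct computation. Applying $\mathrm{sp}_{-k}$ to the functional equation, and using $f(u_i\otimes {\bf f^s})=(fu_i)\otimes {\bf f^s}$ together with $(fu_i)/f^{k}=u_i/f^{k-1}$, yields the relation
\[
b(-k)\,\frac{u_i}{f^{k}}=\widetilde P_i(-k)\,\frac{u_i}{f^{k-1}}\qquad\text{in }M_f,
\]
where $\widetilde P_i(-k)\in D$.

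Since $b(s)$ is a nonzero polynomial, it has only finitely many integer roots, so I can choose $N$ large enough that $b(-k)\ne 0$ for all $k>N$. For such $k$ the displayed relation gives $u_i/f^{k}=b(-k)^{-1}\widetilde P_i(-k)\,(u_i/f^{k-1})\in D\cdot(u_i/f^{k-1})$, and an induction on $k\ge N$ shows $u_i/f^{k}\in D\cdot(u_i/f^{N})$ for every $k\ge N$. Consequently the $D$-submodule $M'\subseteq M_f$ generated by $\{u_i/f^{k}:0\le k\le N\}$ already contains $u_i/f^{k}$ for all $k\ge 0$.

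It remains to see that $M'=M_f$, and this is the step I expect to require the most care, precisely because the $u_i$ generate $M$ only over $D$ and not over $R$. Here I would invoke that the powers of $f$ form an Ore set in $D$, which follows from the nilpotence of $\Ad_f$ noted before Proposition \ref{Dequal}; then $M_f$ is the Ore localization of $M$, so every element has the form $f^{-k}m$ with $m\in M$ and $k\ge 0$. Writing $m=\sum_i Q_iu_i$ with $Q_i\in D$ and rewriting each left fraction $f^{-k}Q_i$ as a right fraction $Q_i'f^{-k_i'}$ (using that the Ore condition holds on both sides), I get $f^{-k}Q_iu_i=Q_i'(u_i/f^{k_i'})$, which lies in $M'$ since $u_i/f^{k_i'}\in M'$ and $M'$ is a $D$-submodule. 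Summing over $i$ shows $f^{-k}m\in M'$, whence $M_f=M'$ is finitely generated over $D$, as claimed.
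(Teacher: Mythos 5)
Your proof is correct and takes essentially the same route as the paper: both apply the specialization morphism at negative integers to the functional equation of Proposition \ref{311} and use that $b$ has finitely many integer roots to run a descending induction on the powers of $f$. The only difference is that the paper compresses the endgame into ``the conclusion follows,'' whereas you make explicit the genuinely necessary final step (since $M$ is generated over $D$, not over $R_f$) of rewriting left fractions $f^{-k}Q$ as right fractions via the Ore condition coming from the nilpotence of $\Ad_f$, so as to conclude that the $D$-module generated by the finitely many $u_i/f^k$, $0\le k\le N$, is all of $M_f$.
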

\begin{proof}
For $\ell \in \ZZ$, we  define a morphism of specialization $\phi_\ell : M_f[s]{\bf f^s}\to M_f$ by 
$\phi_\ell (us^i\otimes {\bf f^s})=\ell^i f^\ell u$, such that $\phi_\ell (P(s)v)= P(\ell)\phi_\ell (v)$.
Then, by applying this morphism to the result of Proposition \ref{311}, we have$$
b(\ell)f^\ell u=P(\ell)f^{\ell} u
$$and the conclusion follows.
\end{proof}

\begin{Cor}\label{PropLBC}
Let $M$ be a left $D$-module in the Bernstein class. Then, $M_f$ is also in the Bernstein class
for all $f\in R$.
\end{Cor}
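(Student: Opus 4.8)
The plan is to reduce everything to a dimension count, since the finite generation has already been established. Recall that a nonzero finitely generated $D$-module lies in the Bernstein class exactly when it attains the minimal dimension $\Dim(\gr^\Gamma(D)) - \wgd(D) = 2n - n = n$, using Theorem \ref{GradedRing} and Proposition \ref{DimDif}. By hypothesis $M$ is in the Bernstein class, so $\Dim(M) = n$, and by Corollary \ref{312} the localization $M_f$ is a finitely generated $D$-module. If $M_f = 0$ there is nothing to prove, so assume $M_f \neq 0$; the task is then to show $\Dim_D(M_f) = n$.

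The lower bound comes for free: applying Proposition \ref{122} to the nonzero finitely generated $D$-module $M_f$ gives $\Dim_D(M_f) \geq \Dim(\gr^\Gamma(D)) - \wgd(D) = n$. So it remains to prove the reverse inequality $\Dim_D(M_f) \leq \Dim(M) = n$, that is, that localizing at the single element $f$ does not enlarge the characteristic variety.

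For the upper bound I would first record the filtered behavior of localization. Choosing a good filtration $\Pi$ on $M$, the induced filtration on $M_f = M \otimes_R R_f$ has associated graded $\gr^\Pi(M) \otimes_R R_f = \gr^\Pi(M)_f$, a localization of $\gr^\Pi(M)$ at $f \in R \subseteq \gr^\Gamma(D)$, and passing from $\gr^\Pi(M)$ to $\gr^\Pi(M)_f$ can only shrink the support inside $\Spec(\gr^\Gamma(D))$. To turn this into a genuine statement about $\Dim_D(M_f)$ --- the subtle point, since this induced filtration need not be a good $D$-filtration --- I would localize at each maximal ideal and use the identities $\gr^{\Gamma_m}(D_m) = R_m \otimes_R \gr^\Gamma(D)$ and $\gr^{\Pi_m}(M_m) = R_m \otimes_R \gr^\Pi(M)$ recorded before Theorem \ref{GradedRing}. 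These express $\Dim_D$ as the supremum over maximal ideals $m \subset R$ of the dimension over $D_m$, and over each $D_m$ the associated graded $\gr^{\Gamma_m}(D_m) = R_m[y_1,\dots,y_n]$ is a polynomial ring in which $D_m$ is a localized Weyl-type algebra. In that local setting the classical fact that localization at one element does not raise the dimension of a finitely generated module applies to $(M_m)_f$, and taking the supremum over $m$ yields $\Dim_D(M_f) \leq n$.

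Combining the two inequalities gives $\Dim_D(M_f) = n$, so $M_f$ has minimal dimension and lies in the Bernstein class, as claimed. The main obstacle is precisely this upper bound: one must guarantee that the naive localized filtration actually computes $\Dim_D(M_f)$, and the reduction to the rings $D_m$ --- where the associated graded is a polynomial ring and the standard preservation of holonomicity under localization is available --- is the step that makes the argument rigorous.
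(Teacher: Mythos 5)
Your proposal is correct and follows essentially the same route as the paper: both rely on Corollary \ref{312} for finite generation of $M_f$, then reduce to the local rings $D_m$ via the compatibilities $\gr^{\Gamma_m}(D_m)=R_m\otimes_R\gr^\Gamma(D)$ and $\gr^{\Pi_m}(M_m)=R_m\otimes_R\gr^\Pi(M)$, and invoke the Mebkhout--Narv\'aez-Macarro local result that the Bernstein class of $D_m$ is closed under localization at one element. Your extra lower-bound step via Proposition \ref{122} and the discussion of good filtrations are harmless elaborations of the same argument.
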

\begin{proof}
Since $M_f$ is finitely generated as a $D$-module, it suffices to show that $\Dim_{\gr(D)}(\gr(M_f))$ is $n$.
Since $R_m\otimes_R M$ is in the Bernstein class of $D_m$, we have that $M_f$ is in the Bernstein class of $D_m$ for every maximal ideal
$m\subset R$ by
Theorem $2.2.3$ in \cite{MeNa}. Thus, $\Dim_{\gr(D_m)}(\gr((M_m)_f))=n$ and, therefore 
$\Dim_{\gr(D)}(\gr(M_f))=n$.
\end{proof}

\begin{proof}[Proof of Theorem \ref{MainOne}]
This is a consequence of Theorem \ref{GradedRing}, Proposition \ref{DimDif} and Corollary \ref{PropLBC}.
\end{proof}

\section{Local cohomology}
Let us recall the family of functors introduced by Lyubeznik \cite{LyuDMod}.
If $Z \subset \Spec ( R)$ is a closed subset and $M$ is an $R$-module, we denote by 
$H^i_Z (M)$
the $i$-th local cohomology module of $M$ with support in $Z$. This can be calculated via the $\check{\mbox{C}}$ech complex 
as follows:
\begin{equation}\label{LC1}
0\to M\to \oplus_{i} M_{f_i}\to\ldots  \to \oplus_i M_{f_1\cdots\hat{f}_1 \cdots f_\ell}\to
M_{f_1\cdots f_\ell} \to 0\end{equation}
where $Z=\cV(f_1,\ldots,f_\ell)=\{P\in\Spec(R): (f_1,\ldots,f_\ell)\subset P\}$
 
For two
closed subsets of $\Spec(R)$, $Z_1\subset Z_2$, there is a long exact sequence of functors.  In particular, 
$H^i_Z (M)=H^i_I (M)$.

\begin{equation}\label{LC2}
\ldots\to H^i_{Z_1}\to H^i_{Z_2}\to H^i_{Z_1/Z_2}\to \ldots
\end{equation}
\begin{Def}\label{Notation} We say that $\cT$ is a \emph{Lyubeznik functor} if has the form $\cT =\cT_1\circ \dots\circ\cT_t$, where every functor $\cT_j$
is either $H_{Z_1},$ $H^i_{Z_1\setminus Z_2},$ or the kernel, image or cokernel of some arrow in the previous long exact sequence
for closed subsets $Z_1,Z_2$ of $\Spec(R)$ such that $Z_2\subset Z_1$.
\end{Def}
\begin{Lemma}\label{LAP1}
Let $M$ be a left $D$-module in the Bernstein class. Then, $\cT(M)$ has a natural structure of $D$-module such that 
it belongs to the Bernstein class for every functor $\cT$ as in Definition \ref{Notation}.
\end{Lemma}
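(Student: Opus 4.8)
The plan is to establish the claim by structural induction on the construction of the Lyubeznik functor $\cT$, since by Definition \ref{Notation} every such functor is built from the basic pieces $H^i_{Z_1}$ and $H^i_{Z_1\setminus Z_2}$ together with kernels, images, and cokernels of the connecting maps in the long exact sequence \eqref{LC2}. The base case requires showing that the basic local cohomology functors, when applied to a Bernstein-class module $M$, produce $D$-modules that again lie in the Bernstein class. The inductive step then requires that the Bernstein class be stable under the remaining operations (kernel, image, cokernel, and the formation of subquotients arising in the exact sequences). Since the Bernstein class is already known to be closed under submodules, quotients, and extensions (as recorded in the discussion following the definition of the Bernstein class), the inductive step should follow once the base case is secured.

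For the base case I would first treat $H^i_{Z_1}(M)$ when $Z_1 = \cV(f_1,\ldots,f_\ell)$. The key observation is that $H^i_{Z_1}(M)$ is the $i$-th cohomology of the \v{C}ech complex \eqref{LC1}, whose terms are finite direct sums of localizations of the form $M_{f_{i_1}\cdots f_{i_k}}$. By Corollary \ref{PropLBC}, each single localization $M_{f}$ of a Bernstein-class module stays in the Bernstein class, and iterating gives that each localization $M_{f_{i_1}\cdots f_{i_k}}$ at a product of the $f_i$ is again in the Bernstein class (a product localization is an iterated one-element localization). Each map in the \v{C}ech complex is a $D$-linear map, so the complex is a complex of $D$-modules in the Bernstein class; its cohomology modules are subquotients of Bernstein-class modules and hence, by closure under submodules and quotients, lie in the Bernstein class themselves. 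This simultaneously equips $H^i_{Z_1}(M)$ with a natural $D$-module structure and places it in the Bernstein class. The functor $H^i_{Z_1\setminus Z_2}$ is then handled by the same reasoning applied to the long exact sequence \eqref{LC2}: it appears as a term whose neighbors are the Bernstein-class modules $H^i_{Z_1}(M)$ and $H^i_{Z_2}(M)$, so closure under kernels, images, cokernels, and extensions forces it into the Bernstein class as well.

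The inductive step is then largely formal: if $M$ lies in the Bernstein class and $\cT'(M)$ is in the Bernstein class for a shorter composition $\cT'$, then applying one more outer functor $\cT_1$ — whether a basic local cohomology functor or a kernel/image/cokernel of a connecting arrow — yields a module that is a subquotient or extension of Bernstein-class $D$-modules, hence in the Bernstein class. One must check at each stage that the connecting homomorphisms in \eqref{LC2} are morphisms of $D$-modules, so that the kernels, images, and cokernels inherit the $D$-structure naturally; this follows because the \v{C}ech-complex description makes every map $D$-linear and the long exact sequence is the cohomology sequence of a short exact sequence of complexes of $D$-modules.

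I expect the main obstacle to be the verification that the $D$-module structure is genuinely \emph{natural} and compatible across all the constructions — that is, that the $D$-action coming from the \v{C}ech complex on $H^i_{Z_1}(M)$ agrees with the one induced through the long exact sequence, so that the connecting maps are $D$-linear and the subquotients in the inductive step are taken in the category of $D$-modules rather than merely $R$-modules. The dimension-theoretic content (that these subquotients have the minimal dimension $n$ characterizing the Bernstein class) is comparatively cheap once one knows closure under submodules and quotients from the anti-equivalence of categories, so the care lies in the bookkeeping of the module structures and the functoriality, rather than in any hard estimate.
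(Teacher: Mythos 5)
Your proposal is correct and follows essentially the same route as the paper: the paper likewise equips the localizations $M_f$ with their natural $D$-module structure (citing Examples $2.1$ in \cite{LyuDMod} for the $D$-structure on $\cT(M)$, which settles the naturality concern you flag), invokes Corollary \ref{PropLBC} for closure of the Bernstein class under localization at one element, and then uses closure under submodules, quotients, and extensions to handle the terms, kernels, images, and homology of the complexes (\ref{LC1}) and (\ref{LC2}). The only difference is presentational: you make the structural induction and the $D$-linearity bookkeeping explicit, where the paper compresses both into that citation.
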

\begin{proof}
$M_f$ has a structure of $D$-module given by 
$$\partial\cdot m/f^\ell=(f^\ell\delta\cdot m -\delta(f^\ell)m)/f^{2\ell}$$ for every $\delta\in\Der_F(R)$. Then, $\cT(M)$ is a $D$-modules by Examples $2.1$ in \cite{LyuDMod}.
Since M is in the Bernstein class,
$M_f$ is in the Bernstein class and $M\to M_f$ is a morphism of $D$-modules
by Corollary \ref{PropLBC}.  Since the 
Bernstein class is closed under extension, submodules and quotients, 
every element in the complexes (\ref{LC1}) and (\ref{LC2}) as well as the 
kernels, images and homology groups are in the same class, and the result follows.
\end{proof}
\begin{Lemma}\label{LAP2}
Let $M$ be a left $D$-module in the Bernstein class. Then, $\Ass_R (M)$ is finite.
\end{Lemma}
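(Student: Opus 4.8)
The plan is to reduce to the case of a simple $D$-module and then show that such a module has exactly one associated prime. Since $M$ lies in the Bernstein class, it has finite length as a $D$-module (as recorded for Bernstein-class modules just after the definition of the Bernstein class), so it admits a finite filtration $0=M_0\subset M_1\subset\cdots\subset M_k=M$ whose successive quotients $M_i/M_{i-1}$ are simple $D$-modules; each of these is again in the Bernstein class because that class is closed under submodules and quotients. As associated primes are subadditive along short exact sequences, we have $\Ass_R(M)\subseteq\bigcup_{i=1}^k\Ass_R(M_i/M_{i-1})$, so it suffices to prove the lemma when $M$ is a simple $D$-module.

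Assume then that $M$ is simple, and let $Q\in\Ass_R(M)$, so that $R/Q$ embeds into $M$. Every element of the image of this embedding is annihilated by $Q$ and hence is supported on $\cV(Q)$; consequently the submodule $H^0_{\cV(Q)}(M)$ of sections supported on $\cV(Q)$ is nonzero. The key point is that $H^0_{\cV(Q)}(M)$ is in fact a $D$-submodule of $M$: writing $\cV(Q)=\cV(f_1,\ldots,f_\ell)$, it is the kernel of the map $M\to\oplus_i M_{f_i}$ at the left end of the \v{C}ech complex (\ref{LC1}), and each map $M\to M_{f_i}$ is a morphism of $D$-modules via the formula used in the proof of Lemma \ref{LAP1}. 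Since $M$ is simple and $H^0_{\cV(Q)}(M)\neq 0$, I would conclude $H^0_{\cV(Q)}(M)=M$, which forces $\Supp_R(M)\subseteq\cV(Q)$. Because $Q\in\Supp_R(M)$ we also have $\cV(Q)\subseteq\Supp_R(M)$, whence $\Supp_R(M)=\cV(Q)$. This identifies $Q$ uniquely as the unique minimal prime of the (irreducible) support of $M$, so $\Ass_R(M)=\{Q\}$ is a singleton.

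Combining the two steps, $\Ass_R(M)$ is contained in a finite union of singletons and is therefore finite. The step I expect to require the most care is the $D$-stability of $H^0_{\cV(Q)}(M)$, namely that taking sections supported on an arbitrary closed set preserves the $D$-module structure; this is precisely the mechanism underlying Lemma \ref{LAP1} and Examples $2.1$ of \cite{LyuDMod}, so it introduces no genuinely new obstacle. A secondary point to justify is that a composition series by simple $D$-modules exists at all, which is guaranteed by the finite length of Bernstein-class modules together with the left and right Noetherian property of $D$; the substantive content of the argument is the reduction of the problem to simplicity and the observation that a simple $D$-module has irreducible support.
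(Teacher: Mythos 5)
Your proof is correct, but it is organized around a different key lemma than the paper's, even though both arguments rest on the same two pillars: the finite length of Bernstein-class modules over $D$ and the fact that the torsion functor $H^0_Z(-)$ applied to a $D$-module yields a $D$-submodule. The paper never passes to simple modules: it picks a \emph{maximal} element $P_1$ of $\Ass_R(M)$, observes that $N_1=H^0_{P_1}(M)$ is a nonzero $D$-submodule whose only associated prime is $P_1$ (any associated prime of $N_1$ lies in $\Ass_R(M)$ and contains $P_1$, hence equals it by maximality), and then iterates on $M/N_1$; finite length over $D$ forces the process to terminate, and subadditivity of associated primes along the resulting filtration gives $\Ass_R(M)\subset\{P_1,\ldots,P_\ell\}$. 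You instead fix an arbitrary composition series and prove the sharper structural statement that a simple $D$-module has irreducible support, its generic point being the only possible associated prime; your argument for this (simplicity forces $H^0_{\cV(Q)}(M)=M$, hence $\Supp_R(M)=\cV(Q)$ for \emph{every} $Q\in\Ass_R(M)$, and distinct primes have distinct vanishing loci) is sound, and the $D$-stability of $H^0_{\cV(Q)}(M)$ is justified exactly as in Lemma \ref{LAP1}. Your route buys a clean intermediate result about simple $D$-modules and the explicit bound that $\Ass_R(M)$ has at most as many elements as the length of $M$ over $D$; the paper's route needs no composition series and produces a filtration canonically adapted to the associated primes themselves, each factor having exactly one associated prime. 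One small remark: your observation that the simple factors remain in the Bernstein class is true but unnecessary, since your argument in the simple case uses only simplicity and the $D$-module structure of torsion submodules, not minimality of dimension.
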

\begin{proof}
Suppose $M\neq 0$.
Let $M_1=M$ and  $P_1$ be a maximal element in the set of the associated primes of $M_1$.
Then, $N_1=H^0_{P_1}(M_1)$  a nonzero $D$-submodule of $M_1$, and it has only one associated
prime. Given $N_{j}$ and $M_j$, set $M_{j+1}=M_{j}/N_j$.  If $M_{j+1}\neq 0$, let $P_{j+1}$ be a maximal element in the set of the
associated primes of $M_{j+1}$. Then 
$N_{j+1}=H^0_{P_j} (M_{j+1})$ has only one associated
prime. If $M_{j+1}=0$, set $N_{j+1}=0$. Since $M_1=M$ has finite length as a $D(R,A)$-module, there exist $\ell\in \NN$ such that $M_j=0$ for $j\geq \ell$, and then
$\Ass(M)\subset\{P_1,\dots,P_\ell\}$.
\end{proof}
\begin{Teo}\label{CorAss}
Let $R$ be a ring that satisfies Hypothesis \ref{H} and let $M$ be an $D$-module in its left Bernstein class. 
Then, $\Ass_R (\cT(M))$ is finite for every functor $\cT(-)$ as in Definition \ref{Notation}. In particular, this holds for $H^i_I(R)$ for every $i\in\NN$ and ideal $I\subset R$.
\end{Teo}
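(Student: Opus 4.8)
The plan is to deduce Theorem \ref{CorAss} directly from Lemmas \ref{LAP1} and \ref{LAP2}, which already contain the substantive work. Given a left $D$-module $M$ in the left Bernstein class and any Lyubeznik functor $\cT$ as in Definition \ref{Notation}, Lemma \ref{LAP1} endows $\cT(M)$ with a natural $D$-module structure and guarantees that $\cT(M)$ again lies in the Bernstein class. Lemma \ref{LAP2} then applies to $\cT(M)$ and yields at once that $\Ass_R(\cT(M))$ is finite. This settles the general statement; no further computation is required.

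For the final assertion, I would first check that $R$ itself is a left $D$-module in the Bernstein class. The ring $R=\Hom_R(R,R)\subset D$ carries its tautological $D$-module structure, and with the good filtration $F_jR=R$ (for $j\geq 0$) its associated graded is $R$, regarded via Remark \ref{GradedSym} as a $\gr(D)\cong\Sym(\Der_F(R))$-module on which $\Der_F(R)$ acts by zero. Hence the characteristic variety of $R$ is the zero section of $\Spec(\Sym(\Der_F(R)))$ and $\Dim_D(R)=n$. By Theorem \ref{GradedRing} the graded dimension of $\gr(D)$ is $2n$, and by Proposition \ref{DimDif} we have $\wgd(D)=n$, so $\Dim_D(R)=n=\Dim(\gr(D))-\wgd(D)$, which is precisely the minimality condition defining the Bernstein class. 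Equivalently, the computation $\Ext^n_D(R,D)\neq 0$ from the proof of Proposition \ref{DimDif}, together with the vanishing $\Ext^j_D(R,D)=0$ for $j>n$, gives $\Grade_D(R)=n$ and the same conclusion through Proposition \ref{122}.

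It then remains only to note that ordinary local cohomology is a Lyubeznik functor: writing $Z=\cV(I)$ one has $H^i_I(-)=H^i_Z(-)$, which is of the form $H^i_{Z_1}$ and therefore qualifies under Definition \ref{Notation}. Applying the general statement with $M=R$ and $\cT=H^i_I$ yields that $\Ass_R(H^i_I(R))$ is finite for every $i\in\NN$ and every ideal $I\subset R$.

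I do not anticipate a genuine obstacle here, since the two preceding lemmas already furnish both the preservation of the Bernstein class under $\cT$ and the finiteness of associated primes for a module in that class. The one point demanding care is the verification that $R$ belongs to the Bernstein class; the subtlety is to select the correct good filtration on $R$ and to invoke the pure graded dimension $2n$ of $\gr(D)$ so that the minimality of $\Dim_D(R)$ is established without ambiguity.
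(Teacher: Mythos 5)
Your proposal is correct and follows the same route as the paper: the theorem is deduced directly from Lemmas \ref{LAP1} and \ref{LAP2}, which the paper's own proof cites as the entire argument. Your additional verification that $R$ itself lies in the left Bernstein class (via the trivial good filtration, or equivalently via $\Grade_D(R)=n$ and Proposition \ref{122}) is a point the paper leaves implicit, and it is carried out correctly.
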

\begin{proof}
This follows immediately form Lemmas \ref{LAP1} and \ref{LAP2}. 
\end{proof}
\begin{Cor}\label{CorLCMC}
Let $(R,m,K)$ be a regular local ring of mixed characteristic $p>0$. Then, the set of associated primes of $\cT (R)$ that does not
contain $p$
is finite for every functor $\cT$ as in Definition \ref{Notation}.
\end{Cor}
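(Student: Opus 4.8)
The plan is to invert $p$ so as to land in characteristic $0$, complete in order to apply the Cohen structure theorem, invoke Theorem \ref{CorAss} for the resulting ring (which will satisfy Hypothesis \ref{H}), and finally descend the finiteness back to $R$ along a faithfully flat map. Concretely, write $R[1/p]$ for the localization of $R$ at the element $p$. The primes $P\in\Ass_R(\cT(R))$ with $p\notin P$ are in bijection with $\Ass_{R[1/p]}(\cT(R)\otimes_R R[1/p])$, so it suffices to make the latter set finite. Since $R\to R[1/p]$ is flat and every $\cT$ is assembled from the \v{C}ech complexes (\ref{LC1}) and the long exact sequences (\ref{LC2}) by kernels, images and cokernels, the exact functor $-\otimes_R R[1/p]$ commutes with $\cT$; hence $\cT(R)\otimes_R R[1/p]=\cT'(R[1/p])$, where $\cT'$ is the Lyubeznik functor on $\Spec R[1/p]$ obtained by restricting the defining closed sets.

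Next I would pass to the $m$-adic completion $S=\widehat R$. The map $R[1/p]\to S[1/p]$ is faithfully flat, being the base change along $R\to R[1/p]$ of the faithfully flat map $R\to S$; and by the same flat-base-change reasoning $\cT'(R[1/p])\otimes_{R[1/p]}S[1/p]=\cT''(S[1/p])$ for the induced Lyubeznik functor $\cT''$ on $\Spec S[1/p]$. For any faithfully flat extension $A\to B$ and any $A$-module $N$, each $P\in\Ass_A(N)$ produces an inclusion $B/PB\hookrightarrow N\otimes_A B$ with $B/PB\neq 0$, the sets $\Ass_B(B/PB)$ are nonempty and pairwise disjoint (their members contract to $P$), and therefore $|\Ass_A(N)|\le|\Ass_B(N\otimes_A B)|$. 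Applying this with $A=R[1/p]$, $B=S[1/p]$ and $N=\cT'(R[1/p])$ reduces the problem to the finiteness of $\Ass_{S[1/p]}(\cT''(S[1/p]))$.

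Finally, $S=\widehat R$ is a complete regular local ring of mixed characteristic $p$, so by the Cohen structure theorem it is isomorphic either to $V[[x_1,\ldots,x_n]]$ in the unramified case or to $V[[x_1,\ldots,x_{n+1}]]/(\pi-h)$ with $h$ in the square of the maximal ideal in the ramified case, where $V$ is a complete DVR with uniformizer $\pi=p$ and $F=\Frac(V)$ has characteristic $0$. Inverting $p$ presents $S[1/p]$ as a ring of the type of Proposition \ref{Ex1} (unramified) or of the Main Example \ref{MainEx} (ramified); in either case $S[1/p]$ is a regular $F$-algebra satisfying Hypothesis \ref{H}. Theorem \ref{CorAss}, applied to the ring $S[1/p]$ itself (which lies in the Bernstein class, realizing the minimal dimension $2n-n=n$), then shows $\Ass_{S[1/p]}(\cT''(S[1/p]))$ is finite, which completes the argument. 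The one genuinely delicate point is this last step: matching $\widehat R[1/p]$ with the precise ramified form $\pi-h$ demanded by the Main Example via Cohen's theorem; once that identification is in place, the localization and faithfully-flat descent steps are routine.
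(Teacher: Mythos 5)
Your proposal is correct and follows essentially the same route as the paper: reduce by faithfully flat base change to $\widehat{R}[1/p]$ (the paper completes first and then inverts $p$; you invert $p$ first and then complete --- the target ring, the flat base-change of the Lyubeznik functor, and the descent facts for associated primes are identical), identify that ring via the Cohen structure theorem, and conclude with Theorem \ref{CorAss} through Hypothesis \ref{H}. One point in your favor: you split Cohen's theorem into the unramified case ($V[[x_1,\ldots,x_n]]$, covered by Proposition \ref{Ex1}) and the ramified case ($V[[x_1,\ldots,x_{n+1}]]/(p-h)$ with $h$ in the square of the maximal ideal, covered by the Main Example), whereas the paper's proof asserts the single form $p-h$ with $h\in m^2$ for all complete regular local rings of mixed characteristic --- a form an unramified ring cannot take, since it would force $p\in m_R^2$ --- so your explicit case division actually repairs a small imprecision in the paper's own argument.
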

\begin{proof}
Let $\hat{R}$ be the completion of $R$ with respect to the maximal ideal. 
Then, the set of associated primes of $\cT (R)$ that does not contain $p$ 
is finite if the set of associated primes of $\cT(\hat{R})=\hat{R}\otimes_R \cT(R)$ that does not contain $p$ 
is finite. We can assume without loss of generality that $R$ is complete.
Thus, $R=V[[x_1,\ldots,x_{n+1}]]/(p-h)V[[x_1,\ldots,x_{n+1}]]$ where $(V,p V,K)$ is a DVR of unramified mixed characteristic $p>0$ and $h$ is an element in the 
square of maximal ideal of $V[[x_1,\ldots,x_{n+1}]]$ by Cohen Structure Theorems. Let $F$ be the fraction field of $V$. It suffices to show that 
$\Ass_R (F\otimes_V \cT (R))=\Ass_R (\cT (F\otimes_V R)$ is finite, which follows from our main example  and Theorem \ref{CorAss}.
\end{proof}
\begin{proof}[Proof of Theorem \ref{MainTwo}]
This is an immediate consequence of Corollary \ref{CorLCMC}. 
\end{proof}
\section*{Acknowledgments}
I would like to thank Mel Hochster for his valuable comments and helpful discussions.  
I also wish to thank Luis Narv\'aez-Macarro for reading this
manuscript and for his helpful comments and suggestions.  
Thanks are also due to the National Council of Science and Technology of Mexico by its support through grant $210916.$

{\sc Department of Mathematics, University of Michigan, Ann Arbor, MI $48109$--$1043$, USA.}\\
{\it Email address:}  \texttt{luisnub@umich.edu}
\end{document}